\newtheorem{theorem}{Theorem}[section]
\newtheorem{lemma}[theorem]{Lemma}
\newtheorem{proposition}[theorem]{Proposition}
\newtheorem{conjecture}{Conjecture}
\newtheorem{question}[conjecture]{Question}
\newtheorem{problem}[conjecture]{Problem}
\theoremstyle{definition}
\newcommand{\tr}{\operatorname{tr}}
\newcommand{\ao}{\text{\rm AO}}
\newcommand{\sao}{\text{\rm SAO}_\beta}
\newcommand{\eps}{\varepsilon}
\newcommand{\trb}{\tfrac{2}{\sqrt\beta}}
\DeclareMathOperator{\Ai}{Ai}
\newcommand{\mat}[4]{\left( \begin{array}{cc}
#1 & #2  \\
#3 & #4  \\
\end{array} \right)}
\newcommand{\pr}{P}
\newcommand{\ev}{E}
\newcommand{\cN}{\mathcal N}
\newcommand{\Sineb}{\operatorname{Sine}_{\beta}}
\newcommand{\sch}{{\sf Sch}_\tau}
\newcommand{\Prob}{P}
\newcommand{\of}[1]{\left ( #1 \right ) }
\newcommand{\eqd}{\stackrel{d}{=}}
\newcommand{\set}[1]{\left\{#1\right\}}
\title{Operator limits of random matrices}
\author{B\'alint Vir\'ag\footnote{Appeared in the Proceedings of the International Congress of Mathematicians, Seoul, 2014}
}
\begin{document}

\bibliographystyle{plain}


\date{May 25, 2014}
\maketitle

\begin{abstract}
We present a brief introduction to the theory of operator limits of random matrices to non-experts. Several open problems and conjectures are given. Connections to statistics,
integrable systems, orthogonal polynomials, and more, are discussed.
\end{abstract}

\section{Introduction}

Wigner introduced random matrices to mathematical physics as a model
for eigenvalues in a disordered system, such as a large nucleus. In the classical approach to random matrices, one considers some statistic of the matrix, and tries to understand the large $n$ limit.

Here we follow a different approach. It is along the lines of the ``objective method'' coined by David Aldous. The goal is to take a limit of the entire object of interest, in this case the matrix itself. This has the advantage that the structure in the matrix will be preserved in the random limit. This method has been very successful in understanding
random objects, notable examples are (the classical) Brownian motion, the continuum random tree, the Brownian map, and SLE, and recent limits of dense and sparse graphs.

This study of random matrices was initiated by the predictions in the work of
Edelman and Sutton \cite{ES07}. They suggested that the tridiagonal matrix models introduced by Trotter \cite{Trotter} and Dumtiriu and Edelman \cite{DE}, should have certain differential operator limits. Their work was the starting point of intense activity in the area, which is what this paper intends to review.

We will first introduce the tridiagonal models. Then we consider various operator limits and discuss some applications.

\section{Tridiagonal models}
\label{s:tridiagonal}
Trotter never thought that his 1984 paper \cite{Trotter}, in which he introduced tridiagonalization to the theory of random matrices, would ever be very important. Indeed, he just used it to give a different proof for the Wigner semicircle law for the GOE, of which there are (and had been) a plethora of other proofs. His proof was nevertheless beautiful, and we will present a quick modern version in Section \ref{s:rooted}.

Tridiagonalization is a method to find eigenvalues of self-adjoint matrices that is still used in modern software, for example in the Lanczos algorithm.  It is also useful if we want to store the eigenvalues of an $n\times n$ matrix, but not $n^2$ data points, without operations beyond linear algebra.

Starting with an $n\times n$ symmetric matrix $A$, first conjugate it with a special block orthogonal matrix so that its first coordinate vector is fixed. Writing both matrices in the block form
$$
\mat{1}{}{}{O}\mat{a}{b^\dagger}{b}{C}\mat{1}{}{}{O^\dagger}=\mat{a}{(Ob)^\dagger}{Ob}{OCO^{\dagger}}
$$
so one can choose $O$ so that $b$ becomes a nonnegative multiple of the first coordinate vector, and the first row is like that of a tridiagonal matrix. One can iterate this procedure (conjugating by an orthogonal matrix fixing the first $k$ coordinates in the $k$th step), to get a tridiagonal matrix.

The Gaussian orthogonal ensemble (GOE) is the random matrix $A=(M+M^t)/\sqrt{2}$ where $M$ has independent standard Gaussian entries. It has the property that conjugation by an orthogonal matrix preserves its distribution.

Exploiting this property and independence, we see that the result of tridiagonalization is a symmetric matrix with independent diagonals $a_i$, (resp. off-diagonals $b_i$). Setting $\beta=1$ and dividing by $\sqrt{n\beta}$ we get the tridiagonal matrix $T$ with entries
\begin{equation}\label{e:bhermite}
a_i\sim N(0,2/n\beta), \qquad b_i\sim \chi_{(n-i)\beta}/\sqrt{n\beta}.
\end{equation}
(Recall that $\chi_k$ is the distribution of the length of an $n$-dimensional vector with independent standard normal entries.
Starting with standard complex normals gives the Gaussian unitary ensemble (GUE) and the same story with $\beta=2$.
It will be convenient to consider the resulting joint
density for the variables $a_i$, $\log b_i$ as a constant times
\begin{equation}\label{chi}
\exp (-\mbox{$\frac{\beta}{4}$} n \tr V(T))
  \times \prod_{k=1}^{n-1}b_k^{\beta (n-k)}
\end{equation}
with $V=x^2$.

The tridiagonalization procedure seem to produce a non-unique result (there are many choices for the orthogonal matrices), but this is not the case. If the vectors $e, Ae, \ldots A^{n-1}e$ are linearly independent, we always get the {\it same } Jacobi matrix (tridiagonal with positive off-diagonals). It is, in fact the matrix $A$ written in the Gram-Schmidt orthonormalization of this basis.

In both descriptions, $T$ is an orthogonal conjugate to $A$, with the first coordinate vector fixed. If one defines this as an equivalence relation on symmetric matrices where $e$ is cyclic, then each class contains exactly one Jacobi matrix, so they are natural class representatives.

So $T$, with $2n-1$ parameters, encodes the $n$ eigenvalues of $A$. But what else does this encode? Check that $$A^k_{11}=T^k_{11}=\int x^k d\sigma,$$ for the {\bf spectral measure }
$$\sigma =\sum_{i=1}^n q_i \delta_{\lambda_i},$$ where $q_i=\varphi_{i,1}^2$ for the normalized eigenvectors $\varphi_i$.
So $T$ encodes the spectral measure, which is a probability measure supported on $n$ points and so are described by $2n-1$ parameters.

Since for the GOE the eigenvectors are uniform on the unit $n$-sphere and independent of the eigenvalues, we can write the joint density on $\lambda_i,\log q_i$ as a constant times
\begin{equation}\label{dbeta}
\exp (-\beta n \tr V(A))
 \times \prod_{i<j} |\lambda_i-\lambda_j|^\beta \prod_{k=1}^{n}q_i^{\beta/2}
\end{equation}
using the well-known formula for the eigenvalue distribution \cite{AGZ}.
Now the factors the left of $\times$ in \eqref{chi} and \eqref{dbeta} are equal, since $A,T$ have the same eigenvalues. Interestingly, the same holds for the value on the right, see Section 3.1 of \cite{Deift}! Since it is also known that the map
\begin{equation}\label{e:chv}
(a_1, \ldots, a_n,\log b_1, \ldots \log b_{n-1}) \mapsto (\lambda_1,\ldots, \lambda_n, \log q_1,\ldots \log q_n)
\end{equation}
is a bijection, it follows that it
is {\bf measure-preserving} (up to a fixed constant). As a consequence, the equivalence of measures \eqref{chi}, \eqref{dbeta} holds for all functions $V$ and $\beta>0$. When $V=x^2$, the model is called the {$\beta$-Hermite ensemble} and this was shown with the same methods by Dumitriu and Edelman \cite{DE}. Just as in the special cases of the GOE and GUE, the tridiagonal matrix $T$ has independent entries.

This model \eqref{dbeta} on $n$ points is called Dyson's beta ensemble.

\bigskip \noindent {\bf Structure of the tridiagonal matrices.}
As one expects, various features of the eigenvalue distribution can
be read off the tridiagonal matrix $T$. For example, the top (and bottom)
eigenvectors of the matrix have all of their $\ell^2$ mass in the
first order $n^{1/3}$ coordinates. So in order to understand edge statistics,
one can take a scaling limit of this part of $T$.

Similarly, for the $\beta$-Hermite $T$ eigenvectors for eigenvalues near $0$
have their $\ell^2$-mass distributed through the whole length $n$. So bulk local statistics of eigenvalues will be understood
by taking an operator limit of $T$ on this scale.

So while local eigenvalue statistics have to do with the
global structure of $T$, the global statistics of eigenvalues (such as
the Wigner semicircle law) have to do with the local structure of $T$ at a random
vertex, as we will see next. The spectral measure at the first coordinate
is also closely related to the eigenvalue distribution.

\section{Density of states}\label{s:rooted}

In this section, we pursue the point of view of operator limits to deduce the Wigner semicircle law. In fact, we  will get two proofs, one using rooted convergence of graphs, and the other using Benjamini-Schramm convergence.

\bigskip\noindent{\bf Rooted convergence and the Wigner semicircle law.} A sequence of edge-labeled, bounded degree rooted graphs $(G_n,o)$ is said to converge locally to a rooted graph $G$ if for every $r$, the $r$ neighborhood of $o$ the graph stabilizes and the labels in the neighborhood converge pointwise as $n\to\infty$.

For example, using the asymptotics $$\chi_n \approx \sqrt{n} + N(0,1/2),$$ we see that the $\beta$-Hermite ensemble matrix $T=T_n$ (thought of as weighted adjacency matrix) rooted at the first vertex
converges almost surely locally to the graph $T^*$ of the nonnegative integers (with weights 1) as $n\to\infty$ and $\beta$ is fixed.

Here we identity the graphs with their adjacency matrices. Recall the spectral measure of $G$ at $o$ is the measure whose $k$-th moments are $G^k_{o,o}$. The method of moments shows that rooted convergence implies convergence of spectral measures at the root $o$.

The moments of the spectral measure of $T^*$ at $o$ these are the number of returning simple random walk paths that stay nonnegative; they characterize the Wigner semicircle law.

What we have shown is that the spectral measures converge almost surely. But the spectral measure assigns Dirichlet$(\beta/2,\ldots \beta/2)$ weights to the eigenvalues, see \eqref{dbeta}. The law of large numbers for these weights shows that the empirical eigenvalue distribution has the same limit.

An argument like this works for more general potentials $V$ -- in this case the limiting rooted labeled graph is the Jacobi operator associated to the orthogonal polynomials with respect to the measure $e^{-V(x)}\,dx$, see \cite{KRV}.

\bigskip
\noindent{\bf Benjamini-Schramm limits and the Wigner semicircle law.} Here we deduce the semicircle law in a way which is, essentially, equivalent to Trotter's \cite{Trotter} but uses no computation. A sequence of unrooted, labeled finite graphs $G_n$ is said to to a random rooted graph $(G,o)$ in the Benjamini-Schramm sense if the law of $(G_n,o)$ converges there with uniform choice of $o$. The convergence is with respect to the topology of rooted convergence introduced above.

Again, the method of moments shows that the expected spectral measure at $o$, which is the empirical eigenvalue distribution of $G_n$, converges to the expected spectral measure of $(G,o)$ at $o$.

A moment of thought shows that the almost sure Benjamini-Schramm limit of the $\beta$-Hermit ensembles is
$
\sqrt{U} \mathbb Z
$, where $\mathbb Z$ is the graph of the integers, rooted at $o$, $U$ is a uniform random variable that comes from the mean of the $\chi$ variable at the uniformly chosen location of the root.

Now $\mathbb Z$ is also the Benjamini-Schramm limit of  $n$-cycles, whose eigenvalues are the real parts of equally spaced points on the circle $\{|z|=2\}\subset \mathbb C$. Hence the spectral measure of $\mathbb Z$ is the real part of uniformly chosen point on the circle of radius $2$.

The expected spectral measure $\mu$ of $\sqrt{U}\mathbb Z$ is thus the real part of the uniformly chosen point from a random circle with radius $2\sqrt{U}$; but this is just another way to chose a point from uniform measure in the disk of radius two. Thus $\mu$ is the semicircle law.

\section{The $\beta$-Hermite random measure on $\mathbb R$}
\label{s:randommeasure}

A special property of the $\beta$-Hermite matrices $\sqrt{n}\,T_n$ is that they
are minors of each other; as a result, they are the minor of
a semi-infinite Jacobi matrix $J=J_\beta$.

The $\beta\to\infty$ limit $J_\infty$ has zeros
on the diagonal and $\sqrt{k}$ at positions $(k
+1,k)$ and $(k,k+1)$. Its spectral measure at the first coordinate is standard normal.

Such matrices have relevance in the theory of orthogonal polynomials.
Here we review a few brief facts. Given a measure $\mu$ with infinite support on $\mathbb R$ with sufficiently
thin tails, the $k$th orthonormal polynomial is the unique degree $n$ polynomial
with positive main coefficient that is orthogonal in $L^2(\mathbb R,\mu)$ to all lower degree polynomials.

One can show that there are unique $a_n$ and $b_n>0$ so that the $p_n$ satisfy
a recursion $p_{k-1}b_{k-1} + p_{k}a_{k}+p_{k+1}b_{k} = x p_{k}$. In
other words, the (not necessarily $\ell^2$) vector $p(x)=(p_k(x))_{k\ge 0}$ satisfies
the eigenvector equation $Jp(x)=xp(x)$ where $J$ is the infinite tridiagonal matrix
built from the $a$-s and $b$-s. Note that here it is crucial that the numbering
is reversed compared to \eqref{e:bhermite}.

Note that $p(x)$ restricted to the first $n$ coordinates is an eigenvector of the
$n\times n$ minor of $J$ if and only if $p_n(x)=0$. In particular, the
$p_n$ are constant multiples of the characteristic polynomials of this minor.

Conversely, given such $J$ and assuming that it is self-adjoint,
one can recover the measure $\mu$ as the spectral measure of $J$
at the first coordinate. Since $J_\beta$ is easily
shown to be self-adjoint, we have shown

\begin{theorem}[Coupling of the $\beta$-Hermite ensembles]
There exists a random measure $\mu_\beta$ so that for all $n$ the zeros of the orthogonal polynomial $p_n$
with respect to $\mu_\beta$ are distributed as the eigenvalues of
the $n$-point $\beta$-Hermite ensemble.
\end{theorem}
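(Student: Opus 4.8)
The measure in question should be the spectral measure $\mu_\beta$ of $J_\beta$ at the first coordinate vector, and the statement should come from running the orthogonal\-/polynomial\-/Jacobi\-/matrix dictionary of this section backwards. First I would pin down the normalization. By \eqref{e:bhermite}, the matrix $\sqrt n\,T_n$ has i.i.d.\ $N(0,2/\beta)$ on the diagonal and independent off-diagonal entries distributed as $\chi_{(n-i)\beta}/\sqrt\beta$; reversing the order of the coordinates (a conjugation by a permutation matrix, hence preserving eigenvalues and the tridiagonal form) turns this into a Jacobi matrix whose $k$th off-diagonal entry has law $\chi_{k\beta}/\sqrt\beta$, which does not depend on $n$. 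Hence the reversed $\sqrt n\,T_n$ for varying $n$ can be coupled to be consistent top-left minors, and they assemble into the single semi-infinite random Jacobi matrix $J_\beta$ described in the text, of which the reversed $\sqrt n\,T_n$ is the $n\times n$ minor.

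Next I would verify that $J_\beta$, defined initially on finitely supported sequences, is almost surely essentially self-adjoint. The off-diagonal entries are $b_k=\chi_{k\beta}/\sqrt\beta$, and using $\chi_m\approx\sqrt m+N(0,1/2)$ one gets $b_k\asymp\sqrt k$ almost surely, so $\sum_k b_k^{-1}=\infty$ a.s.\ and Carleman's criterion applies (a direct limit-point estimate works equally well). This is the only place where anything genuinely has to be estimated, and the point is merely that the off-diagonals grow like $\sqrt k$. Essential self-adjointness puts us in the limit-point case: the moment problem of $\mu_\beta$ is determinate, the orthonormal polynomials of $\mu_\beta$ form an orthonormal basis of $L^2(\mu_\beta)$, and the Jacobi matrix reconstructed from their three-term recurrence is again $J_\beta$. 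Moreover $\mu_\beta$ has infinite support, since the off-diagonals of $J_\beta$ are a.s.\ nonzero and hence $e_1,J_\beta e_1,\dots$ are linearly independent; so the degree-$n$ orthonormal polynomial $p_n$ of $\mu_\beta$ is a genuine polynomial of degree $n$ for every $n$.

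With this in hand the theorem is immediate. As recalled just before the statement, $p_n$ is a nonzero scalar multiple of the characteristic polynomial of the $n\times n$ minor of $J_\beta$, so its $n$ zeros are exactly the eigenvalues of that minor. But the $n\times n$ minor of $J_\beta$ is the reversed $\sqrt n\,T_n$, so these eigenvalues have the law of the eigenvalues of the $\beta$-Hermite tridiagonal matrix of Section~\ref{s:tridiagonal}, which is the $n$-point $\beta$-Hermite ensemble. Since one and the same random measure $\mu_\beta$ produces all of the $p_n$, this is precisely the asserted coupling.

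I expect essential self-adjointness to be the only real (though small) obstacle, for two reasons: it is what guarantees that $\mu_\beta$ is an honest, almost surely finite object with all moments finite, and it is what makes the correspondence $J_\beta\leftrightarrow\mu_\beta$ a bijection, so that the recurrence coefficients of $\mu_\beta$ really are the entries of \eqref{e:bhermite} and not those of some other measure sharing the same moments. Everything else is bookkeeping with the rescaling $T_n\mapsto\sqrt n\,T_n$ and with the standard measure\-/to\-/Jacobi\-/matrix dictionary.
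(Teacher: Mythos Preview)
Your proposal is correct and follows exactly the approach the paper takes: the paper assembles the reversed $\sqrt{n}\,T_n$ into the semi-infinite Jacobi matrix $J_\beta$, remarks that $J_\beta$ is ``easily shown to be self-adjoint,'' and then invokes the standard fact that the degree-$n$ orthonormal polynomial of the spectral measure at $e_1$ is a multiple of the characteristic polynomial of the top $n\times n$ minor. You simply make explicit the two points the paper leaves implicit---the coordinate reversal that makes the minors nest, and the Carleman argument for essential self-adjointness---so there is nothing to add.
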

It also follows that the $\beta$-Hermite eigenvalues are exactly the Gaussian quadrature points for this measure!

The measure $\mu_\beta$ can be thought of as a random ``rough'' version of the standard normal
distribution ($\mu_\infty$). The measure has been studied by Breuer, Forrester, and Smilansky \cite{BFS}. They showed that its Hausdorff dimension
is almost surely equal to $(1-2/\beta)^+$. For $\beta<2$, the measure is pure point. A similar phenomenon holds for the family of Gaussian multiplicative cascade
measures, see, for example \cite{rhodes2013gaussian} in some sense it is a noncommutative version.  A natural question
is the following
\begin{question}[Spectral measure and multiplicative cascades]
Does the $\beta-Hermite$ measure and the Gaussian multiplicative
cascade measure with the same Hausdorff dimension have the same fractal spectrum?
\end{question}

\begin{question}[Nested models]
Can any other Dyson $\beta$-ensembles be coupled this way?
How about other natural random matrix models?
\end{question}

\section{Edge limits and the stochastic Airy operator}

For $n$ large and $k=o(n)$, we have the asymptotics
$\chi_{n-k} \asymp \sqrt{n}-k/\sqrt{4n}+ N(0,1/2)$. Thus the top minor of size $o(n)$
of $(2I-T)$ looks like a discrete second derivative plus multiplication by
$2k/n$, plus multiplication by discrete independent noise. The precise continuous analogue would be
\begin{equation}\label{sao}
\sao = -\partial_t^2 + t + \trb b'
\end{equation}
called the Stochastic Airy Operator, where $b'$ is a distribution (the derivative of standard Brownian motion). Edelman and Sutton \cite{ES07} conjectured that this operator, acting on $L^2(\mathbb R^+)$ with Dirichlet boundary conditions $f(0)=0$, is the edge limit of $T_n$. This was proved in in \cite{RRV}:
\begin{theorem}\label{t:rrv}
There exists a coupling of the $\beta$-Hermite random matrices $T_n$ on the same probability space so that
a.s. we have
$$ n^{2/3}(2  I -T_n)\to \sao $$
in the norm-resolvent sense: for every $k$ the bottom $k$th eigenvalue converges the and corresponding eigenvector converges in norm.
Here $2 I -T_n$ acts on the embedding $\mathbb R^n\subset L^2(\mathbb R_+)$ with coordinate vectors $e_j=n^{1/6}{\mathbf 1}_{[j-1,j]n^{-1/3}}$.
\end{theorem}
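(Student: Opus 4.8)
The plan is to realize both the discrete operators $n^{2/3}(2I-T_n)$ and the limit $\sao$ through their associated quadratic forms, and to prove convergence of these forms together with enough compactness to pass to the min--max characterization of the eigenvalues. On the space of absolutely continuous $f$ on $\mathbb R_+$ with $f(0)=0$ and $\int_0^\infty\big((f')^2+(1+t)f^2\big)\,dt<\infty$ I would define
$$
\mathcal H_\beta[f]=\int_0^\infty (f')^2\,dt+\int_0^\infty t\,f^2\,dt+\trb\int_0^\infty f^2\,db,
$$
where the last term is interpreted via integration by parts as $-\trb\int_0^\infty 2ff' b\,dt$. The first task is to show this is well defined, closed, and bounded below: since a.s. $b(t)=O(\sqrt{t\log\log t})=o(t)$, the noise term is dominated by the confining term $\int tf^2$ and is relatively form--bounded with relative bound $0$. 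The form representation theorem then produces a self-adjoint operator $\sao$ with compact resolvent (the form domain embeds compactly in $L^2$ by a Rellich--type argument: $\int tf^2<\infty$ kills mass at infinity, $\int(f')^2<\infty$ kills oscillation), hence with discrete spectrum $\Lambda_0<\Lambda_1<\cdots$ given, for $V$ ranging over subspaces of the form domain, by
$$
\Lambda_k=\min_{\dim V=k+1}\ \max_{f\in V,\ \|f\|_2=1}\ \mathcal H_\beta[f].
$$

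On the discrete side, the embedding $e_j=n^{1/6}{\mathbf 1}_{[j-1,j]n^{-1/3}}$ turns $n^{2/3}(2I-T_n)$ into an operator on step functions whose quadratic form is a discretization of $\mathcal H_\beta$: the leading behavior $b_k\approx 1$ of the off--diagonal entries produces the discrete second difference, which at spacing $n^{-1/3}$ and with the $n^{2/3}$ scaling yields $\int(f')^2$; the deterministic sub--leading drift of the $\chi$ variables produces the linear multiplication operator $t$; and the centered fluctuations of the $\chi_{(n-k)\beta}$, when summed, produce a random walk converging to Brownian motion, yielding the term $\trb\int f^2\,db$. The core of the argument is then three estimates, carried out after coupling all the models to a single Brownian motion (via a strong embedding) so that the relevant partial--sum processes converge locally uniformly a.s.\ \emph{(i) A priori tightness:} any unit vector (discrete or continuum) whose form value is at most $\Lambda$ is concentrated on an interval $[0,O_\Lambda(1)]$ and has $H^1$--type norm $O_\Lambda(1)$, with constants uniform in $n$; this is where one must show that the rough term $\trb\int f^2\,db$ cannot destroy the coercivity supplied by $\int tf^2$, controlling it by integration by parts and the growth bound on $b$, including boundary and tail contributions. \emph{(ii) $\liminf$ bound:} a sequence of discrete test functions with bounded form values is, by (i), precompact in $L^2$, and any subsequential limit $f$ satisfies $\liminf$ of the discrete forms $\ge\mathcal H_\beta[f]$. \emph{(iii) $\limsup$ bound:} any finite--dimensional subspace of the form domain of $\sao$ can be approximated by step functions so that the discrete forms converge from above.

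Feeding (i)--(iii) into the two min--max formulas gives $n^{2/3}\Lambda_k(2I-T_n)\to\Lambda_k$ a.s.\ for every $k$, and the $L^2$--compactness in (i)--(ii) simultaneously forces the corresponding (normalized) eigenvectors to converge in $L^2$; this is exactly the norm--resolvent statement. I expect step (i) --- the uniform--in--$n$ coercivity estimate taming the Brownian term against the linear potential --- to be the main obstacle, since everything else (the identification of the discrete form, the compactness, the $\Gamma$--convergence bounds (ii)--(iii)) is comparatively routine once that control is in hand. For the eigenvalue statement alone there is an alternative route through the Riccati substitution $p=f'/f$, which turns the eigenvalue equation into a one--dimensional diffusion and the eigenvalue counting function into the count of its explosions, allowing a diffusion--approximation comparison with the discrete analogue; but the variational method above is what also yields the eigenvector convergence asserted in the theorem.
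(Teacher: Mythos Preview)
Your proposal is correct and follows essentially the same route as the paper: define the limit through its quadratic form, establish the key relative form bound $|\int f^2\,db|\le C\|f\|^2+\eps(\|f'\|^2+\|\sqrt{t}f\|^2)$ (your step (i), the paper's inequality \eqref{mainbound}), couple the discrete models to a common Brownian motion so that the integrated potentials converge locally uniformly, and then run the two-sided Rayleigh-quotient argument (your (ii)--(iii), the paper's $\limsup/\liminf$ via testing with continuum and discrete eigenvectors). One small caution: the law of the iterated logarithm bound $b(t)=O(\sqrt{t\log\log t})$ alone is not enough to get relative bound $0$ by the crude integration-by-parts estimate---you need, as in \cite{RRV}, additional control on the local oscillations of $b$---but you already flag this coercivity step as the main obstacle.
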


The limiting distribution of the top eigenvalue of the GOE, and GUE are called the Tracy-Widom distribution TW$_\beta$ with $\beta=1,2$, respectively. It follows that for $\beta=1,2$ the negative of the bottom eigenvalue $-\Lambda_0$ of $\sao$ has TW$_\beta$ distribution. For more general $\beta$, this can be taken as a definition of TW$\beta$.

The domain of $\sao$ can be defined precisely (see \cite{bloemendal2011finite}), but we will not do that here. The eigenvalues and eigenvectors can be defined though the Courant-Fisher characterization,
$$
\Lambda_k = \inf_{A:\dim A =k+1} \sup_{f\in A, \|f\|_2=1} \langle f,\sao f\rangle.
$$
the latter can be defined via integration by parts as long as $f$, $f'$ and $\sqrt{t}f$ are in $L^2(\mathbb R^+)$, and in the formula $A$ is a subspace of such functions. The eigenvectors are defined as the corresponding minimizers, and can be shown to be unique, see \cite{RRV}.

\begin{proof}[Glimpses of the proof of Theorem \ref{t:rrv}.]
We explain how to show that the bottom eigenvalue converges (see \cite{RRV} for the rest).
It is a nice exercise \cite{RRV} to show that given a Brownian path and $\eps>0$
there is a random constant $C$ so that for every function
$f$ with $f,f',\sqrt{t}f  \in L^2(\mathbb R)$ we have
$$
|\int f^2\,dB |\le C \|f\|^2 + \eps( \|f'\|^2 + \|\sqrt{t}f\|^2) =
C\|f\|^2+
\eps \langle f, \ao f \rangle.
$$
where $\ao=\operatorname{SAO}_\infty$ is the usual Airy operator $-\partial_t^2 + t$.
In other words, we have the positive definite order of operators
\begin{equation}\label{mainbound}
-C + (1-\eps) \ao \le \sao \le (1+\eps) \ao+ C
\end{equation}
Using Skorokhod's representation and the central limit theorem, we can guarantee a coupling so that the integrated potential of $2I-T_n$ converges uniformly on compacts to that of $\sao$. Moreover, the discrete analogues of the bound \eqref{mainbound} will hold with uniform constants $C$ and all $n$.
Note that taking the bottom eigenvector $f_0$ of $\sao$ and plugging it into the approximating operators, the Rayleigh quotient formula shows that their bottom eigenvalues satisfy
$$\limsup \lambda(n) \le \Lambda_0$$

Conversely, $\sao$ can be tested against any weak limit of the bottom eigenfunctions $f(n)$, which must exist because of the discrete version of \eqref{mainbound} guarantees enough tightness. As a result,
\[\liminf \lambda(n) \ge \Lambda_0.\qedhere \]
\end{proof}
A different operator appears at the so-called hard edge, see \cite{hardedge}, and \cite{ramirez2011hard} for further analysis.

\section{Applications of the stochastic Airy operator}

The stochastic Airy operator is a Schr\"odinger-type operator,
and therefore tools from the classical theory are applicable.

First, as a self-adjoint operator, one can use Rayleigh quotients or positive definite
ordering to characterize its low-lying eigenvalues. Second, as a Schr\"odinger
operator, one can use oscillation theory for the same. We will briefly
show how these methods work.

\begin{theorem}
Let $\Lambda_k\uparrow$ be the eigenvalues of $\sao$. Then almost surely
$$
\lim_{k\to\infty} \frac{\Lambda_k}{k^{2/3}} = \left( \frac{3\pi}{2}\right)^{2/3}
$$
\end{theorem}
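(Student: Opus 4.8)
The plan is to exploit that high in the spectrum the noise term $\trb b'$ in $\sao$ is a small perturbation of the deterministic Airy operator $\ao=-\partial_t^2+t$, so that the growth of $\Lambda_k$ is governed entirely by the eigenvalues of $\ao$. Concretely I would combine three ingredients: the operator sandwich \eqref{mainbound}, the Courant--Fisher min--max formula for the $\Lambda_k$, and the classical asymptotics of the zeros of the Airy function.

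\emph{Step 1: the deterministic model.} First I would record that the eigenvalues $\mu_0<\mu_1<\cdots$ of $\ao$ on $L^2(\mathbb R^+)$ with Dirichlet condition at $0$ are $\mu_k=|a_{k+1}|$, where $a_1>a_2>\cdots$ are the (negative) zeros of $\Ai$: the solution of $-f''+tf=\mu f$ that decays at $+\infty$ is $f(t)=\Ai(t-\mu)$, and $f(0)=0$ forces $\Ai(-\mu)=0$. The classical asymptotics $|a_k|=\bigl(\tfrac{3\pi k}{2}\bigr)^{2/3}(1+o(1))$ then give $\mu_k/k^{2/3}\to\bigl(\tfrac{3\pi}{2}\bigr)^{2/3}$. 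If one prefers to see this directly, a WKB/oscillation count supplies it: a solution of $-f''+tf=\mu f$ undergoes $\tfrac1\pi\int_0^{\mu}\sqrt{\mu-t}\,dt+o(\mu^{3/2})=\tfrac{2}{3\pi}\mu^{3/2}+o(\mu^{3/2})$ sign changes before the turning point $t=\mu$ and only $O(1)$ beyond it, so the eigenvalue counting function of $\ao$ is $\tfrac{2}{3\pi}\mu^{3/2}(1+o(1))$; inverting gives the same limit.

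\emph{Step 2: transferring the estimate to $\sao$.} Fix $\eps>0$. By \eqref{mainbound} there is, almost surely, a finite random constant $C=C(\eps)$ with
\[
-C+(1-\eps)\,\ao\ \le\ \sao\ \le\ (1+\eps)\,\ao+C
\]
as quadratic forms on the common form domain ($f,f',\sqrt{t}\,f\in L^2(\mathbb R^+)$). Since the $\Lambda_k$ are defined by min--max, which is monotone under the positive-definite ordering of operators, this yields
\[
(1-\eps)\,\mu_k-C\ \le\ \Lambda_k\ \le\ (1+\eps)\,\mu_k+C\qquad\text{for every }k.
\]
Dividing by $k^{2/3}$, letting $k\to\infty$, and using Step 1,
\[
(1-\eps)\Bigl(\tfrac{3\pi}{2}\Bigr)^{2/3}\ \le\ \liminf_{k}\frac{\Lambda_k}{k^{2/3}}\ \le\ \limsup_{k}\frac{\Lambda_k}{k^{2/3}}\ \le\ (1+\eps)\Bigl(\tfrac{3\pi}{2}\Bigr)^{2/3}.
\]
Running this along a fixed sequence $\eps_m\downarrow0$, so that all the bounds hold on one event of full probability, and letting $m\to\infty$ finishes the argument.

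\emph{Where the work is.} With \eqref{mainbound} in hand the argument is essentially bookkeeping; the only points needing care are the deterministic Airy asymptotics of Step 1 and the genuine monotonicity of min--max on the form domain shared by $\ao$ and $\sao$ (part of the construction of $\sao$ recalled above). A conceptually different route, closer to the oscillation-theory viewpoint, would be to analyze directly the diffusion given by the Pr\"ufer phase of $\sao f=\lambda f$ and to show that its total rotation before the turning point is $\tfrac{2}{3\pi}\lambda^{3/2}(1+o(1))$ almost surely as $\lambda\to\infty$, with the random part contributing only lower order; the main obstacle there is an almost-sure law of large numbers for the martingale part of the phase, uniform in $\lambda$, which is exactly the difficulty the soft comparison argument above is designed to avoid.
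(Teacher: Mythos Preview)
Your proposal is correct and follows essentially the same approach as the paper: use the operator sandwich \eqref{mainbound} together with min--max monotonicity to pin $\Lambda_k$ between $(1\pm\eps)\mu_k\pm C$, then invoke the classical asymptotics of the Airy zeros and let $\eps\downarrow 0$. Your write-up is in fact slightly more careful than the paper's (you make explicit the countable sequence $\eps_m\downarrow0$ needed for the almost-sure statement), but the idea is identical.
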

\begin{proof}
As a consequence of \eqref{mainbound}, that  inequality \eqref{mainbound} also holds when we replace the operators $\ao$, $\sao$ by
their $k+1$st eigenvalues $\mathcal A_k,\;\Lambda_k$. By letting $\eps\to 0$
we see that $\Lambda_k/\mathcal A_k \to 1$ a.s.  Now note that eigenfunctions
of $\ao$ are translates of the solution $\Ai$ of the Airy differential equation
\begin{equation}\label{Ai}
(-\partial_t^2 +t)\Ai = 0 ,\qquad \Ai(t)\to 0 \mbox{ as }t\to\infty
\end{equation}
by some
$a$ so that $\Ai(-a)=0$. The classical asymptotics of the zeros of $\Ai$
now imply the claim.
\end{proof}

\noindent{\bf Applications of the Rayleigh quotient formula.} Next, we show an argument from \cite{RRV} that gives a sharp bound on the sub-Gaussian left tail of the $TW_\beta$ distribution of $-\Lambda_0$. It only relies on Rayleigh quotients and standard Gaussian tail bounds!

\begin{lemma}\label{l:upperleft}
\begin{eqnarray*}
  P ( \Lambda_0 > a )\le \exp \Big( - \frac{\beta}{24} \,a^3 (1+o(1)) \Big).
\end{eqnarray*}
\end{lemma}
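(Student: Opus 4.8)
The plan is to bound the upper tail of $\Lambda_0$ --- which is the left tail of $\mathrm{TW}_\beta$, since $-\Lambda_0$ has that law --- by feeding a single well-chosen \emph{deterministic} test function into the Rayleigh quotient characterization of $\Lambda_0$ from Section~6. For any fixed admissible $f$ with $\|f\|_2=1$ and $f(0)=0$ one has, almost surely, $\Lambda_0\le\langle f,\sao f\rangle$, so $P(\Lambda_0>a)\le P(\langle f,\sao f\rangle>a)$. Since $f$ is deterministic,
\[
\langle f,\sao f\rangle=\langle f,\ao f\rangle+\trb\int_0^\infty f^2\,dB,\qquad \langle f,\ao f\rangle=\int_0^\infty (f')^2+\int_0^\infty t f^2,
\]
where the first term is a constant and, provided $\int f^4<\infty$, the Wiener integral $\int_0^\infty f^2\,dB$ is a centered Gaussian of variance $\int_0^\infty f^4$. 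Hence, as soon as $a>\langle f,\ao f\rangle$, the elementary bound $P(N(0,\sigma^2)>x)\le e^{-x^2/2\sigma^2}$ yields
\[
P(\Lambda_0>a)\le\exp\!\Big(-\frac{\beta\,(a-\langle f,\ao f\rangle)^2}{8\int_0^\infty f^4}\Big).
\]
This is the entire probabilistic input; the rest is a calculus of variations problem.

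It remains to choose $f=f_a$ so that the exponent above is $\tfrac{\beta}{24}a^3(1+o(1))$. The relevant scale is $t\asymp a$: writing $f(t)=a^{-1/2}\phi(t/a)$ with $\phi$ normalized on $\mathbb R^+$ gives $\langle f,\ao f\rangle=a\int s\phi^2+O(a^{-2})$ and $\int f^4=a^{-1}\int\phi^4$, so the exponent equals $\tfrac{\beta}{8}a^3(1-\int s\phi^2)^2/\!\int\phi^4$ to leading order, and one is led to maximize $(1-\int s\phi^2)^2/\!\int\phi^4$ over normalized $\phi$. A Lagrange-multiplier computation shows the optimizer has $\phi^2$ \emph{affine} on its support, which after undoing the rescaling is the triangular profile
\[
f_a(t)=\frac{\sqrt 2}{a}\big((a-t)^+\big)^{1/2},\qquad \|f_a\|_2=1,\quad \int_0^\infty t\,f_a^2=\frac a3,\quad \int_0^\infty f_a^4=\frac{4}{3a}.
\]
Plugging in, $(a-\tfrac a3)^2/(\tfrac{4}{3a})\cdot\tfrac\beta8=\tfrac{\beta}{24}a^3$; in fact, over profiles supported on $[0,ca]$ the exponent is proportional to $c(1-c/3)^2$, whose maximum at $c=1$ is precisely what pins the constant $\tfrac1{24}$.

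The one nonroutine point is that $f_a$ as written is not an admissible test function: it violates the Dirichlet condition ($f_a(0)=\sqrt{2/a}\ne0$), and $\int(f_a')^2$ diverges logarithmically at the tip $t=a$. So the final step is to replace $f_a$ by a mollified variant that ramps down to $0$ over short intervals near $0$ and near $a$; a direct estimate shows these modifications change $\|f_a\|_2$ and $\int f_a^4$ by a factor $1+o(1)$ and add only $o(a)$ to $\langle f_a,\ao f_a\rangle$, so they are absorbed into the $(1+o(1))$ in the exponent (note $\langle f_a,\ao f_a\rangle=\tfrac a3+o(a)$ is all that is needed, so even an $O(1)$ correction is harmless). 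Carefully certifying that every error term is genuinely of lower order than $a^3$ in the exponent --- in particular that the boundary corrections cost only $o(a)$ in the Airy form --- is where the real work lies; everything else is the Gaussian tail bound and the short optimization above.
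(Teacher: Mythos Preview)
Your approach is essentially the same as the paper's: both feed a deterministic test function into the Rayleigh quotient, reduce to a one-dimensional Gaussian tail bound, and optimize to arrive at the profile $f^2(t)\propto (a-t)^+$ mollified near $t=0$ and $t=a$. The paper presents the mollified test function explicitly as $f(x)=(x\sqrt a)\wedge\sqrt{(a-x)^+}\wedge(a-x)^+$ (the outer pieces being exactly the ramps you describe) and computes the four norms directly, whereas you derive the same profile from the Lagrange-multiplier argument and leave the mollification abstract; the arithmetic leading to $\beta/24$ is identical.
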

\begin{proof} The Raleigh quotient
formula implies that
$$
\Lambda_0 > a \qquad \Rightarrow \qquad \langle f,\sao f \rangle >  a
$$
for all nice functions $f$. Note that any fixed $f$ will give a bound, and $\langle f,\sao f \rangle$ is just a Gaussian random variable with mean $\|f'\|_2^2+\|f\sqrt{t}\|_2^2$ and variance
$\frac{4}{\beta}\|f\|^4_4$.  In the quest for a good $f$ one expects the optimal $f$ to be relatively ``flat" and ignore the $\|f'\|^2_2$ term. In the tradition of zero-knowledge proofs, it is legal to hide the resulting variational problem and how to solve it from the reader (see \cite{RRV} Section 4). Out of the hat comes
$$
f(x) =   (x\sqrt{a} ) \wedge  \sqrt{(a - x)^+}\wedge
(a-x)^+,
$$
where the middle term is dominant, while the others control
 $\|f'\|_2$. Then
$$
  a\|f\|_2^2 \sim  \frac{a^3}{2}, \qquad
  \| f'\|_2^2 = O(a), \qquad
  \| \sqrt{x} f \|_2^2  \sim  \frac{a^3}{6} , \qquad
   \| f \|_4^4 \sim  \frac{a^3}{3}  .
$$
The proof is completed by substitution, with a standard normal $N$,
\begin{equation*}
  P ( \Lambda_0 > a ) \le P  \left( \frac{2}{\sqrt{3\beta}} \, {a}^{3/2} \, N>
   a^3\left(\frac{1}{2}-\frac{1}{6}+o(1)\right) \right)
  = \exp \Big( - \frac{\beta}{24} \,a^3 (1+o(1)) \Big).
\end{equation*}
\end{proof}

\noindent{\bf Applications of Sturm-Liouville oscillation theory.} Taking the logarithmic derivative  $W=f'/f$ (also called Riccati transformation) transforms the eigenvalue equation $\sao f= \lambda f$ to a first order non-linear ODE. We write this in the SDE form
\begin{equation}\label{SDE}
dW = \trb\, db + \left(t-\lambda-W^2\right)dt
\end{equation}
this can be thought of as an equation on the circle compactification of $\mathbb R$: a solution that explodes to $-\infty$ in finite time should continue from $+\infty$.
In this sense,
the solution is monotone in $\lambda$: increasing $\lambda$ moves it the ``down'' direction on the circle.

Let's first restrict the operator to a finite interval $[0,\tau]$ with Dirichlet boundary condition. Then $\lambda$ is an eigenvalue iff an explosion happens at $\tau$, and increasing $\lambda$ moves the explosions to the left. On $(0,\tau)$ we thus have
\begin{equation}
\#\{\mbox{ explosions }\}  \,= \,\#\left\{ \mbox{ eigenvalues }<\; \lambda\;\right\}.
\end{equation}
For the $\sao$ this statement remains true with $\tau=\infty$, and as a consequence
$$
P(W_\lambda\mbox{ never explodes})=P(\lambda<\Lambda_0).
$$
Let $P_{t,w}$ denote the law of the solution $W$ of the $\lambda=0$ version of \eqref{SDE} started at time $t$ and  location $w$.  Setting
$$
F(t,w)=P_{t,w}(W\mbox{ never explodes }),
$$
we see that the translation invariance of \eqref{SDE} implies that
$$
\lim_{w\uparrow \infty} F(-\lambda,w) =P(\lambda<\Lambda_0).
$$
This gives a characterization for the Tracy-Widom distribution
TW$_\beta$ of $-\Lambda_0$. Boundary hitting probabilities
of
an SDE can always be expressed as solutions of a PDE boundary value problem. Indeed, such functions are martingales and are killed by the generator, see \cite{BV}. So $F$ satisfies
\begin{equation}\label{PDE}
\partial_t F+ \tfrac{2}{\beta}\,\partial_w^2 F + (t-w^2)\,\partial_w F\,= \,0\qquad\text{ for }t,w\in{\mathbb R},
\end{equation}
with
$F(t,w)\to 1$  as $t,w\to\infty$ together, and $F(t,w)\to 0$ as $w\to-\infty$ with $t$ bounded above.

It is easy to check that the problem has a unique bounded solution, and so it gives a characterization of the Tracy-Widom-$\beta$ distribution. However, new ideas were needed to connect these equation to the Painlev\'e systems; before we turn to these, we consider an application of \eqref{SDE} from \cite{RRV}.

\bigskip\noindent{\bf SDE representation and tail bounds.} We now show how the SDE representation \eqref{SDE} is used to attain tail bounds for the law TW$_\beta=-\Lambda_0$ in \cite{RRV}. We prove the matching lower bound to Lemma
\ref{l:upperleft}; readers not familiar with Cameron-Martin-Girsanov transformations may skip this proof.
\begin{lemma}
\begin{eqnarray*}
  P ( \Lambda_0 > a )\ge \exp \Big( - \frac{\beta}{24} \,a^3 (1+o(1)) \Big).
\end{eqnarray*}
\end{lemma}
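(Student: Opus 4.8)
The plan is to produce a lower bound on $P(\Lambda_0 > a) = P(W_0 \text{ never explodes})$ by exhibiting an explicit event of probability at least $\exp(-\frac{\beta}{24}a^3(1+o(1)))$ on which the solution of \eqref{SDE} with $\lambda=0$, started from $+\infty$, fails to explode. The natural strategy is a change of measure: under the original law the drift $t - W^2$ is strongly pushing $W$ downward in the region $t \in (0,a)$, so non-explosion is costly; we tilt the Brownian motion $b$ by a Cameron--Martin shift that cancels just enough of this drift to make non-explosion typical, and then pay the Girsanov price for the shift.

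First I would guess the optimal path. Since the left-tail upper bound came from the test function $f(x) \approx \sqrt{(a-x)^+}$, whose Riccati transform is $W = f'/f \approx -\tfrac12 (a-x)^{-1}$ on $(0,a)$, I would aim to keep $W_t$ close to the deterministic curve $w^*(t) = -\tfrac{1}{2(a-t)}$ for $t\in(0,a)$ (and let $W$ relax to the stable branch $\sqrt{t-0}=\sqrt t$ for $t>a$, where non-explosion is automatic). Plugging $w^*$ into \eqref{SDE}, the required drift correction is $h'(t) := \dot w^*(t) - (t - (w^*)^2)$; the Cameron--Martin--Girsanov theorem says that shifting $b \mapsto b + \tfrac{\sqrt\beta}{2}\int_0^\cdot h'$ turns the law $P_{0,w}$ into one under which $W$ genuinely tracks $w^*$, at a relative-entropy cost $\exp\big(-\tfrac{\beta}{4}\int_0^a (h'(t))^2\,dt - (\text{martingale term})\big)$. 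A direct computation should give $\int_0^a (h'(t))^2\,dt \sim \tfrac{a^3}{6}$ (the same integral, up to the $\trb$ normalization, that produced $\|\sqrt x f\|_2^2 \sim a^3/6$ and $\|f\|_4^4 \sim a^3/3$ in Lemma~\ref{l:upperleft}), which is exactly what yields the exponent $-\tfrac{\beta}{24}a^3$.

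Then I would make this rigorous as follows. (i) Fix a tube of width, say, $\delta(a-t)$ around $w^*$ on $(0,a-\eps)$ and a fixed window around $\sqrt t$ on $(a-\eps,\infty)$; on the event $\mathcal G$ that the tilted diffusion stays in this tube, it never explodes. (ii) Under the tilted measure $\widetilde P$, the process is an SDE with drift pushing toward $w^*$, so $\widetilde P(\mathcal G)$ is bounded below by a constant (or at worst $e^{o(a^3)}$) by a routine Gronwall/comparison estimate on the deviation $W - w^*$. (iii) Write $P(W \text{ never explodes}) \ge P(\mathcal G) = E_{\widetilde P}\big[\mathbf 1_{\mathcal G}\,\tfrac{dP}{d\widetilde P}\big]$ and bound the Radon--Nikodym derivative below on $\mathcal G$ using the explicit Girsanov density; the martingale term $\int_0^a h'\,db$ is, on $\mathcal G$, a Gaussian of variance $O(\int (h')^2) = O(a^3)$, so by a Chebyshev/Paley--Zygmund argument we may further restrict to the sub-event where it is $o(a^3)$ without losing more than a constant factor. (iv) Finally send $a\to\infty$ and then $\delta,\eps\to 0$ to recover the sharp constant $\tfrac{1}{24}$.

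The main obstacle is step (ii)--(iii): controlling the behaviour near the singular endpoint $t=a$, where $w^*(t)\to-\infty$ and the nonlinearity $W^2$ blows up, so that the naive comparison of $W$ to $w^*$ degenerates precisely where explosion would occur. One must choose the tube and the cutoff $\eps$ carefully — shrinking the tube near $t=a$ fast enough to rule out explosion but slowly enough that the tilted diffusion stays inside with non-negligible probability — and check that the contribution of $(a-\eps,a)$ to $\int (h')^2$ is genuinely negligible. Everything else (the Cameron--Martin computation, the Gaussian martingale estimate, the stable-branch argument for $t>a$) is of the same routine flavour as the Gaussian tail manipulations in the proof of Lemma~\ref{l:upperleft}.
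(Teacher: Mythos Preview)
Your Cameron--Martin--Girsanov strategy is exactly what the paper uses, but your execution differs in a way that manufactures the very obstacle you flag. The paper does not tilt toward any specific curve: it removes the drift entirely, so that under the tilted measure $W$ is just a Brownian motion $b$ (with diffusion coefficient $2/\sqrt\beta$), and then restricts to the event that this Brownian motion stays in the fixed window $[0,2]$ over the time interval $[-a,0]$. On that event the quadratic Girsanov term is $\frac{\beta}{8}\int_{-a}^0 (t-b_t^2)^2\,dt = \frac{\beta}{24}a^3 + O(a^2)$, the stochastic-integral term is deterministically $O(a)$ by It\^o's formula, and the box event has probability at least $e^{-ca}$. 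No tube, no singular endpoint, no limits in $\delta$ or $\eps$.

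Your proposed reference path $w^*(t) = -\tfrac12(a-t)^{-1}$ is the Riccati transform of a function that \emph{vanishes} at $t=a$, so $w^*$ itself \emph{explodes} there --- precisely the behaviour you are trying to exclude. The obstacle you identify is therefore self-inflicted: any bounded reference path already gives leading cost $\frac{\beta}{8}\int_0^a(a-t)^2\,dt=\frac{\beta}{24}a^3$ with no endpoint issue. In particular, sending $\eps\to 0$ would not sharpen the constant but would wreck the argument: the term $-\tfrac14(a-t)^{-2}$ in $h'$ contributes order $\eps^{-3}$ to $\int_0^{a-\eps}(h')^2$, and at time $a-\eps$ you would be at $W\approx-\tfrac1{2\eps}$, where the drift $-(a-t)-W^2\approx -\tfrac1{4\eps^2}$ makes explosion overwhelmingly likely. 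With $\eps$ fixed of order $1$, your $w^*$ is bounded and your scheme collapses to the paper's; the sharp constant is already there. (Incidentally, the correct Girsanov exponent is $\tfrac{\beta}{8}\int(h')^2$, not $\tfrac{\beta}{4}$, and $\int_0^a(h')^2\sim\tfrac{a^3}{3}$, not $\tfrac{a^3}{6}$; the two slips cancel.)
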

\begin{proof}
\noindent
By monotonicity of the solutions, we have
\begin{eqnarray*}
\lefteqn{
  P_{\infty,-a} \Bigl( W \mbox{ never explodes} \Bigr)  \ge   P_{1,-a}
  \Bigl(  W \mbox{ never explodes}  \Bigr) } \\
  &  &  \ge  P_{0,-a} \Bigl( W_t \in [0, 2] \mbox{ for all } t\in[-a,0] \Bigr)
 P_{0,0} \Bigl(  W \mbox{ never explodes} \Bigr).
\end{eqnarray*}
The last factor in line two is some positive number not
depending on $a$. To bound the first factor from
below, we first write it using Cameron-Martin-Girsanov formula as
\begin{eqnarray*}
   E_{1,-a} \Bigl[\exp\left( - \frac{\beta}{4} \int_{-a}^0 (t - b^2_t) db_t - \frac{ \beta}{8} \int_{-a}^0 (t - b^2_t)^2 dt \right);  \, b_t \in [0,2] \mbox{ for all } t\le
   0\Bigr],
\end{eqnarray*}
where, for this proof only,  $b_t$ denotes a Brownian motion with diffusion coefficient
$2/\sqrt{\beta}$.  On the event above, the main contribution comes from
$$
   \frac{\beta}{8} \int_{-a}^{0} (t - b^2_t)^2 \,dt  = \frac{ \beta}{24} a^3  + O(a^2)   ,
$$
of lower order is the second term
$$
    \int_{-a}^0 (t  - b^2_t) db_t =  a b_{-a} + \frac{1}{3} (b^3_{-a} - b^3_0) + (\frac{4}{\beta} - 1)  \int_{-a}^0 b_t dt= O(a).
$$
We are left to compute the probability of the event
$$P_{-a,0} ( b_t \in [0,2] \mbox{ for
} t \le 0 ) \ge e^{-ca},$$
since it is the chance of a Markov chain staying in a bounded set for time proportional to $a$. This does not interfere with the main term.
\end{proof}
In \cite{dumaz2011right} arguments of this kind are used to provide a more precise bound for the other tail $P(\Lambda_0<-a)$, including $-3/4$ the exponent in the polynomial correction.  It was shown that
$$P \left(TW_{\beta} > a \right) = a^{-\frac34
\beta+o(1)}\exp\left(-\frac {2} {3} \beta a^{3/2} \right).
$$
See \cite{borot2012right} for further non-rigorous results in this direction.

\bigskip\noindent{\bf Tail estimates for finite $n$.}
It is possible to make versions the tail estimate proofs for finite $n$, before taking the limit.
This was carried out by Ledoux and Rider \cite{ledoux2010small}. They give strong tail estimates for the $\beta$-Hermite (and also Laguerre) ensembles for finite $n$. We quote the $\beta$-Hermite results
from that paper.

\begin{theorem} There are absolute constants $c,C$ so that for all $n\ge 1$, $\eps\in(0,1]$ and $\beta \ge 1$ the  $\beta$-Hermite ensemble $T_n$ satisfies
$$
c^{\beta}e^{-\beta n \eps^{3/2}/c}\; \le \;P\Big(\lambda_1(T_n) \ge 2(1+\eps)\Big) \;\le\; Ce^{-\beta n \eps^{3/2}/C}
$$
and
$$
c^{\beta} e^{-\beta n^2 \eps^{3}/c}\; \le\; P\Big(\lambda_1(T_n) \le 2(1+\eps)\Big) \;\le\; C^\beta e^{-\beta n^2 \eps^{3}/C}
$$
For the second lower bound we need to assume in addition that $\eps < c$.
\end{theorem}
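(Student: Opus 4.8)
The plan is to work entirely on the tridiagonal model and to run finite-$n$ versions of the stochastic-Airy arguments of Sections 5--6. By \eqref{e:bhermite} the matrix $T_n$ is symmetric tridiagonal with independent entries $a_i\sim N(0,2/(n\beta))$ on the diagonal and $b_i=\chi_{(n-i)\beta}/\sqrt{n\beta}$ off it ($1\le i\le n-1$), so every event below is an event for a concrete $(2n-1)$-dimensional vector with an explicit product density, and the whole proof reduces to Gaussian and chi-square tail and small-ball estimates (all $\chi^2$ that occur have $\asymp n\beta$ degrees of freedom). The second pair of bounds really concerns the \emph{lower} deviation $\lambda_1(T_n)\le 2(1-\eps)$ --- as displayed it reads $2(1+\eps)$, which must be a misprint, since for $\eps\in(0,1]$ the event $\{\lambda_1(T_n)\le 2(1+\eps)\}$ has probability tending to $1$. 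Since the spectrum of $T_n$ sits near $[-2,2]$ with soft edge at $2$, all four are edge estimates at Tracy--Widom scale $a\asymp\eps n^{2/3}$: substituting $a=2\eps n^{2/3}$ into the estimates of Sections 5--6 --- Lemma~\ref{l:upperleft} for the cubic exponent $\tfrac{\beta}{24}a^3$, and $P(TW_\beta>a)\asymp\exp(-\tfrac23\beta a^{3/2})$ for the other --- produces exactly the claimed exponents $\beta n^2\eps^3$ and $\beta n\eps^{3/2}$. So the two \emph{lower} bounds on the probabilities are obtained by exhibiting a favourable event and bounding it below from the density; the left-tail \emph{upper} bound by a Rayleigh quotient; and the right-tail \emph{upper} bound --- the hard one --- by the Sturm/Riccati recursion.

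\textbf{The two lower bounds.} For $P(\lambda_1(T_n)\ge 2(1+\eps))$ I would use Cauchy interlacing: $\lambda_1(T_n)\ge\lambda_1(T_n^{(\ell)})$ for the top-left $\ell\times\ell$ minor, so it suffices to give the event $\{\lambda_1(T_n^{(\ell)})\ge 2(1+\eps)\}$ probability $\ge c^\beta\exp(-\beta n\eps^{3/2}/c)$, and this event depends only on $a_1,\dots,a_\ell,b_1,\dots,b_{\ell-1}$. With $\ell\asymp\eps^{-1/2}$ (the discrete width of the would-be bottom eigenfunction of $\sao$ at energy $-a$) and using that $\lambda_1$ of a Jacobi matrix is increasing in each $a_i$ and each $b_i$ (Perron--Frobenius), the event $\{a_i\ge 4\eps,\ b_i\ge 1\text{ for }i\le\ell\}$ forces $\lambda_1(T_n^{(\ell)})\ge 4\eps+\lambda_1(\text{free }\ell\times\ell)=4\eps+2-\pi^2/\ell^2+O(\ell^{-4})\ge 2(1+\eps)$, and its probability is at least $\prod_{i\le\ell}P(a_i\in[4\eps,5\eps])\,P(b_i\in[1,2])\asymp\big(\mathrm{const}\cdot\eps\sqrt{n\beta}\,e^{-cn\beta\eps^2}\big)^{\ell}=c^\beta\exp(-\beta n\eps^{3/2}/c)$ up to an admissible prefactor loss. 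For $P(\lambda_1(T_n)\le 2(1-\eps))$ the favourable event is global: impose $b_k^2\le(1-\eps)^2$ for \emph{all} $k$ and $|a_k|\le\eps/10$ for all $k$. Then in the Riccati recursion $\rho_k=(2(1-\eps)-a_k)-b_{k-1}^2/\rho_{k-1}$ ($\rho_1=2(1-\eps)-a_1$) every step's map has nonnegative discriminant, hence an attracting fixed point in $(0,2)$ and no escape below it, so $\rho_k>0$ for all $k$ and thus $\lambda_1(T_n)<2(1-\eps)$; the cost, read off the density, is $\prod_{k\lesssim 2\eps n}P\big(\chi^2_{(n-k)\beta}/(n\beta)\le(1-\eps)^2\big)\cdot(\text{cheap factors})\asymp\exp\big(-c\beta n^2\!\int_0^{2\eps}(2\eps-u)^2\,du\big)=c^\beta\exp(-\beta n^2\eps^3/c)$, the hypothesis $\eps<c$ keeping $2\eps n<n$ and the $a_k$-control inexpensive.

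\textbf{The left-tail upper bound} $P(\lambda_1(T_n)\le 2(1-\eps))\le C^\beta\exp(-\beta n^2\eps^3/C)$ is the discrete form of Lemma~\ref{l:upperleft}. By the Rayleigh quotient, $2-\lambda_1(T_n)=\inf_{\|v\|=1}\langle v,(2I-T_n)v\rangle\le\langle v_\star,(2I-T_n)v_\star\rangle$ for the fixed unit vector $v_\star$ obtained by discretizing, on the scale of Theorem~\ref{t:rrv}, the test function $f(x)=(x\sqrt a)\wedge\sqrt{(a-x)^+}\wedge(a-x)^+$ of Lemma~\ref{l:upperleft} with $a=2\eps n^{2/3}$, so that
\[
P\of{\lambda_1(T_n)\le 2(1-\eps)}\le P\of{\langle v_\star,(2I-T_n)v_\star\rangle\ge 2\eps}.
\]
Expanding $\langle v_\star,(2I-T_n)v_\star\rangle=2\|v_\star\|^2-\sum_j a_j(v_\star)_j^2-\sum_j b_j\big((v_\star)_j^2+(v_\star)_{j+1}^2-((v_\star)_{j+1}-(v_\star)_j)^2\big)$ and substituting $Eb_j\approx 1-j/(2n)$ reproduces term by term the continuum identity $\langle f,\sao f\rangle=\|f'\|^2+\|\sqrt t f\|^2+\trb\int f^2\,dB$: the mean is $2\eps\|v_\star\|^2$ minus a term of size $\asymp\eps\|v_\star\|^2$ (the discrete $\tfrac{a^3}2-\tfrac{a^3}6=\tfrac{a^3}3$), while the centred part is a sum of a Gaussian and a shifted $\chi^2$ of variance $\asymp\tfrac1{n\beta}\sum_j(v_\star)_j^4$. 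Standard Gaussian and Bernstein-type $\chi^2$ tail bounds then give the stated exponent; the only care needed is to stay in the Gaussian regime of the $\chi^2$ tail, which holds since $\eps\le 1$ keeps the deviation moderate, and to note that all relevant $\chi$'s have $\asymp n\beta$ degrees of freedom because $v_\star$ is supported on $\asymp\eps n\ll n$ coordinates.

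\textbf{The right-tail upper bound} $P(\lambda_1(T_n)\ge 2(1+\eps))\le C\exp(-\beta n\eps^{3/2}/C)$ is the hard one and the main obstacle, since no single test vector controls $\sup_v\langle v,T_n v\rangle$ and crude (Gershgorin / diagonal-conjugation) bounds only yield the weaker exponent $\beta n\eps^2$. Here I would use Sturm oscillation: with $t=2(1+\eps)$, $\lambda_1(T_n)<t$ holds exactly when the Riccati recursion $\rho_k=(t-a_k)-b_{k-1}^2/\rho_{k-1}$ stays positive for $k=1,\dots,n$ (the discrete avatar of ``$P(W_\lambda\text{ never explodes})=P(\lambda<\Lambda_0)$''), so one must bound the probability that it ever hits $(-\infty,0]$. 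For $t=2(1+\eps)$ the frozen map $\rho\mapsto t-\bar b_k^2/\rho$ with $\bar b_k^2=(n-k)/n$ has an attracting fixed point $\rho_+^{(k)}\approx 1+\sqrt{2\eps+k/n}$, contraction factor $1-\Theta(\sqrt{2\eps+k/n})$, and escape barrier at distance $\asymp\sqrt{2\eps+k/n}$; since the per-step noise from $a_k$ and $b_k^2$ has size $\asymp(n\beta)^{-1/2}$, the recursion behaves like an AR(1) chain of relaxation time $\asymp\eps^{-1/2}$ with stationary fluctuations $\asymp(n\beta)^{-1/2}\eps^{-1/4}$, and reaching the barrier costs $\exp(-\Theta(n\beta\eps^{3/2}))$. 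I would make this rigorous by constructing an explicit supermartingale $G_k(\rho_k)$ --- morally a discrete solution of a Riccati-adapted form of \eqref{PDE}, bounded below by $e^{\beta n\eps^{3/2}/C}$ at the barrier --- that is a supermartingale up to the explosion time, and then invoking the maximal inequality. Choosing this Lyapunov function uniformly in $n,\eps,\beta$ and handling the non-autonomous drift from $\bar b_k^2=(n-k)/n$ sliding down the chain is the crux; it is the quantitative, finite-$n$ replacement for the soft tightness argument after Theorem~\ref{t:rrv} and for the analysis of \eqref{PDE}.
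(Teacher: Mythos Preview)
The paper does not prove this theorem. It is stated without proof and attributed to Ledoux and Rider \cite{ledoux2010small}; the paper's own contribution is only the preceding sentence, ``It is possible to make versions the tail estimate proofs for finite $n$, before taking the limit. This was carried out by Ledoux and Rider \ldots\ We quote the $\beta$-Hermite results from that paper.'' So there is no proof here against which to compare your proposal.

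That said, your overall strategy --- running discrete, quantitative versions of the $\sao$ tail arguments of Sections~5--6 directly on the tridiagonal model --- is precisely what the paper says Ledoux--Rider do, and is in fact close in spirit to their actual argument. Your identification of the typo (the second pair of inequalities must concern $\lambda_1(T_n)\le 2(1-\eps)$, not $2(1+\eps)$) is correct, and your four-way split, with the Rayleigh quotient against a discretized version of the test function from Lemma~\ref{l:upperleft} for the left-tail upper bound and the Riccati/Sturm recursion for the right-tail upper bound, mirrors the structure of \cite{ledoux2010small}.

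One detail in your outline would fail as written. In the right-tail lower bound you force $b_i\ge 1$ for all $i\le\ell\asymp\eps^{-1/2}$. Since $Eb_i^2=(n-i)/n<1$, each $P(b_i\ge 1)$ is a constant bounded away from $1$, so this costs a factor of order $2^{-\ell}\asymp\exp(-c\,\eps^{-1/2})$, which is \emph{not} absorbed by the target $c^\beta e^{-\beta n\eps^{3/2}/c}$ when $n\beta\eps^2$ is bounded (for instance at the Tracy--Widom scale $\eps\asymp n^{-2/3}$, where your extra factor is $e^{-cn^{1/3}}$ while the target lower bound is a positive constant). The fix is not to pin the $b_i$ at all but to use that they already sit at $1-O(i/n)$ with fluctuations $O((n\beta)^{-1/2})$: either shift the $a_i$ a bit more to compensate, or only ask that $b_i$ lie in a window of width $\asymp\eps$ around its mean, which costs nothing at leading order. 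With that repair the rest of your plan is sound, and the right-tail upper bound via a Riccati supermartingale is indeed the substantive step; your AR(1) heuristic correctly predicts the exponent $\beta n\eps^{3/2}$.
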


\section{Finite rank perturbations and Painlev\'e systems}

Johnstone \cite{J2} asked how the top eigenvalue changes in a sample covariance matrix if the population covariance matrix is not the identity, but has one (or a few) unusually large eigenvalues?

Similarly, what happens to the Tracy-Widom distribution
when the mean of the entries of the GOE matrix changes? These questions have been extensively studied. In short, perturbations below a critical window do not make a difference, and above create a single unusually large eigenvalue.

For the $\beta=2$ case, \cite{BBP} derived formulas for the deformed Tracy-Widom distributions using Harish-Chandra integrals.
The quest to understand the critical case for $\beta=1$ lead to a simple derivation of the Painlev\'e equations for $\beta=2,4$ in \cite{BV}.

Note that changing the mean of the GOE is just adding a rank-1 matrix. The
GOE is rotationally invariant, so for eigenvalue distributions we may as well add a rank-1 perturbation of the form $e^te$, with the first coordinate vector $e$. Such a perturbation commutes with tridiagonalization.
At criticality, it becomes a left boundary condition for the stochastic Airy operator. The relevant
theorem form Bloemendal and V. \cite{BV} is
\begin{theorem} Let $\mu_n\in{\mathbb R}$. Let $G = G_n$ be a $(\mu_n/\sqrt{n})$-shifted mean $n\times n$ GOE matrix. Suppose that
\begin{equation}\label{Gspike}
n^{1/3}\left(1-\mu_n\right)\,\to\,w\in(-\infty,\infty]\qquad\text{as }n\to\infty.
\end{equation}
Let $\lambda_1>\dots>\lambda_n$ be the eigenvalues of $G$. Then, jointly for $k=0,1,\ldots$ in the sense of finite-dimensional distributions, we have
\[
n^{1/6}\left(\lambda_k - 2\sqrt{n}\right)\,\Rightarrow\, -\Lambda_{k-1}\qquad\text{as }n\to\infty
\]
where $\Lambda_0<\Lambda_1<\cdots$ are the eigenvalues of $\operatorname{SAO}_{\beta,w}$.
\end{theorem}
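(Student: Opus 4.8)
\bigskip
\noindent\textbf{Proof plan.} The plan is to pass to the tridiagonal picture and then rerun the proof of Theorem~\ref{t:rrv}, carrying along one extra boundary term. Shifting the mean of every entry of the GOE by $\mu_n/\sqrt n$ adds the rank-one matrix $(\mu_n/\sqrt n)\,\mathbf 1\mathbf 1^t=\mu_n\sqrt n\,uu^t$ with $u=\mathbf 1/\sqrt n$; since the GOE is invariant under orthogonal conjugation we may rotate $u$ to the first coordinate vector $e_1$ without changing the law, so $G_n\eqd A_n+\mu_n\sqrt n\,e_1e_1^t$ with $A_n$ a standard GOE. A rank-one perturbation along $e_1$ commutes with tridiagonalization: the Krylov flag of $A_n+ce_1e_1^t$ based at $e_1$ equals that of $A_n$ (each $(A_n+ce_1e_1^t)^k e_1$ equals $A_n^k e_1$ plus lower-order Krylov vectors), and in the Gram--Schmidt basis of that flag $A_n+ce_1e_1^t$ is the tridiagonalization of $A_n$ plus $ce_1e_1^t$. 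Dividing by $\sqrt n$ gives $G_n/\sqrt n\eqd T_n+\mu_n e_1e_1^t$ with $T_n$ the ($\beta=1$) Hermite matrix of \eqref{e:bhermite}, and hence, with $\lambda_k$ decreasing,
\[
n^{1/6}\bigl(\lambda_k(G_n)-2\sqrt n\bigr)\ \eqd\ -\,\lambda_{k-1}(H_n),\qquad H_n:=n^{2/3}\bigl(2I-T_n-\mu_n e_1e_1^t\bigr),
\]
where $\lambda_0(H_n)\le\lambda_1(H_n)\le\cdots$ and $H_n$ acts on the embedding $\mathbb R^n\subset L^2(\mathbb R_+)$ of Theorem~\ref{t:rrv}. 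So it suffices to prove that, for every $K$, the bottom $K$ eigenvalues and eigenvectors of $H_n$ converge to those of $\operatorname{SAO}_{\beta,w}$.

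For this I would repeat the quadratic-form computation behind Theorem~\ref{t:rrv}. Summation by parts gives, for $v=\sum v_i e_i$,
\begin{align*}
\langle v,H_n v\rangle
&=n^{2/3}\sum_i b_i(v_i-v_{i+1})^2
  +n^{2/3}\sum_{i\ge2}\bigl(2-a_i-b_i-b_{i-1}\bigr)v_i^2\\
&\qquad+n^{2/3}\bigl(2-a_1-b_1-\mu_n\bigr)v_1^2 .
\end{align*}
Under a Skorokhod coupling of the $\chi$-variables and Gaussian diagonals with a Brownian motion $b$, and with $v$ the discretization of a fixed smooth compactly supported $f$ (so $v_i\approx n^{-1/6}f(in^{-1/3})$), the first two sums behave exactly as in \cite{RRV}: using $b_i\to1$, $2-a_i-b_i-b_{i-1}\approx i/n$ and $in^{-1/3}\to t$, they converge to $\int_0^\infty (f')^2+\int_0^\infty tf^2+\trb\int_0^\infty f^2\,db$. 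The new ingredient is the boundary term: since $a_1=O(n^{-1/2})$ and $b_1=1+O(n^{-1/2})$ (mean $1-O(1/n)$, fluctuation $O(n^{-1/2})$), while $v_1^2\sim n^{-1/3}f(0)^2$, one gets
\[
n^{2/3}\bigl(2-a_1-b_1-\mu_n\bigr)v_1^2
=\bigl(n^{1/3}(1-\mu_n)+O(n^{-1/6})\bigr)\bigl(f(0)^2+o(1)\bigr)\ \longrightarrow\ w\,f(0)^2
\]
by \eqref{Gspike}. Hence $\langle v,H_n v\rangle\to\langle f,\operatorname{SAO}_{\beta,w}f\rangle$, the Rayleigh form of $\operatorname{SAO}_{\beta,w}$ on the enlarged form domain in which $f(0)$ is unconstrained; when $w=\infty$ the penalty is infinite unless $f(0)=0$, so the Dirichlet form of Theorem~\ref{t:rrv} is recovered.

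With the limiting form identified, the remainder follows the template of the proof of Theorem~\ref{t:rrv}. The comparison \eqref{mainbound} still holds, now with $\ao$ read as $-\partial_t^2+t$ with Neumann condition $f'(0)=0$: the extra boundary term $w f(0)^2$ is form-bounded by this operator with arbitrarily small relative constant, since $f(0)^2\le 2\|f\|_2\|f'\|_2\le\eps\|f'\|_2^2+\eps^{-1}\|f\|_2^2$, and the discrete analogues hold with constants uniform in $n$. Plugging the bottom eigenfunction $f_0$ of $\operatorname{SAO}_{\beta,w}$ into the Rayleigh quotient of $H_n$ gives $\limsup\lambda_0(H_n)\le\Lambda_0$; conversely the discrete bounds yield tightness of the bottom eigenfunctions $v(n)$ of $H_n$, any weak limit $f$ lies in the enlarged form domain with $\|f\|_2=1$, and lower semicontinuity of the form --- the noise term and the boundary term pass to the limit because the integrated noise converges uniformly on compacts and $v(n)\to f$ in $H^1_{\mathrm{loc}}$, so in particular the left endpoint values converge --- gives $\Lambda_0\le\langle f,\operatorname{SAO}_{\beta,w}f\rangle\le\liminf\lambda_0(H_n)$. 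Running the same argument with $(k+1)$-dimensional trial subspaces in the Courant--Fischer formula upgrades this to joint convergence of the bottom $K$ eigenvalues and eigenfunctions, hence to $n^{1/6}(\lambda_k(G_n)-2\sqrt n)\Rightarrow-\Lambda_{k-1}$ in the sense of finite-dimensional distributions, and the Skorokhod coupling makes it almost sure.

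The step I expect to be the main obstacle is the careful treatment of the boundary term $n^{2/3}(2-a_1-b_1-\mu_n)v_1^2$: one must verify both that the $O(1)$-sized quantity $2-a_1-b_1$ differs from $1$ only by fluctuations small enough to be killed by the prefactor $n^{2/3}v_1^2\sim n^{1/3}$ (so that the limit is exactly $w f(0)^2$ and not something shifted), and --- more delicately --- that near-eigenfunctions of $H_n$ are regular enough near the left endpoint to justify $v_1^2\sim n^{-1/3}f(0)^2$ uniformly along the relevant subsequences. Everything else is turning the \cite{RRV} crank for a harmless, relatively form-bounded left-boundary perturbation.
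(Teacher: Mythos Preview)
Your proposal is correct and follows exactly the approach the paper sketches just before the theorem statement (and which is carried out in full in the cited reference \cite{BV}): reduce the mean shift to a rank-one perturbation along $e_1$ by rotational invariance, observe that such a perturbation commutes with tridiagonalization, and then rerun the \cite{RRV} variational argument with the single extra boundary term $n^{2/3}(2-a_1-b_1-\mu_n)v_1^2\to w\,f(0)^2$, which produces the Robin condition $f'(0)/f(0)=w$. The paper itself does not give a proof beyond that outline, and your write-up fills it in faithfully.
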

\noindent Here $\operatorname{SAO}_{\beta,w}$ is the Stochastic Airy operator \eqref{sao} with left boundary condition $f'(0)/f(0)=w$. Similar theorems hold for the other $\beta$-Hermite ensembles perturbed at $e$.

This theorem is useful in two ways. First, it gives a characterization of the perturbed TW laws in terms of a PDE. Conversely, it gives an interpretation of the solutions of a PDE in terms of the perturbed TW laws, giving a fast way to Painlev\'e expressions.

\bigskip\noindent{\bf Painlev\'e formulas.} Let $u(t)$ be the Hastings-McLeod solution of the homogeneous Painlev\'e II equation, i.e.
\begin{equation}\label{PII}
u'' = 2u^3 + tu,
\end{equation}
characterized by
\begin{equation}\label{HM}
u(t)\sim\Ai(t)\quad\text{as }t\to +\infty
\end{equation}
where $\Ai(t)$ is the Airy function \eqref{Ai}. Let
\vspace{-6pt}
\begin{equation}\label{v}{\textstyle
v(t) = \int_t^\infty u^2,\qquad
E(t) = \exp\bigl(-\int_t^\infty u\bigr),\qquad F(t) = \exp\bigl(-\int_t^\infty v\bigr).}
\end{equation}
Next define two functions $f(t,w)$, $g(t,w)$ on ${\mathbb R}^2$, analytic in $w$ for each fixed $t$, by the first order linear ODEs
\begin{equation}\label{w_lax}
\frac{\partial}{\partial w}\begin{pmatrix}f\\g\end{pmatrix}=\begin{pmatrix}u^2&-wu-u'\\-wu+u'&w^2-t-u^2\end{pmatrix}\begin{pmatrix}f\\g\end{pmatrix}
\end{equation}
and the initial conditions
\begin{equation}\label{IC}
f(t,0) \,=\, E(t) \,=\, g(t,0).
\end{equation}

Equation \eqref{w_lax} is one member of the Lax pair for the Painlev\'e II equation. The other pair gives an ODE in the variable $t$. This is now sufficient information to check that $F(t,w)=f(t,w)F(t)$ satisfies the PDE \eqref{PDE}, giving a proof for the Painlev\'e formula $P(\mbox{TW}_2<t) = F(t)$. However, in order to be able to check, we needed to understand where to start looking, and rank-1 perturbation theory helped!

Similar formulas hold for $\beta=4$. For $\beta=1$, Mo \cite{Mo} has developed formulas but we do not know how to check that they satisfy the PDE.
\begin{problem}[Mo's formulas] Find a way to check that Mo's formulas satisfy \eqref{PDE}.
\end{problem}

In \cite{rumanov2012hard} Rumanov finds a new (!) Painlev\'e representation for the
hard edge using the corresponding stochastic operator. But we don't know the bulk analogue, see Question \ref{q:bulkpain}.

\section{Beta edge universality}

The transformation $(\lambda,q)\mapsto (a,b)$ in \eqref{e:chv} turns complicated
dependence into independence in the $\beta$-Hermite case. For more general potentials $V$, the first factor in \eqref{dbeta} is not a product of factors depending on single variables any more, and so the variables are not independent. Still, for quartic $V$ it can be written as a product, where each factor is a function of only two consecutive pairs $(a_i,b_i)$.

This implies that the process  $i\mapsto (a_i,b_i)$ is a Markov chain. Moreover, for general (even) polynomial $V$ it is a $\eta$- Markov with $\eta=\deg V/2-1$, which means that given  $\eta$ consecutive pairs $(a_i,b_i)$ the variables before and after are conditionally independent.

This observation leads naturally to a proof of universality \cite{KRV}. There, it is shown that for $V$ with $V''>c>0$ we have
\begin{theorem}\label{t:main}
There exists a coupling of the random matrices $T=T_n$ on the same probability space and constants $\gamma, \vartheta, \mathcal E$ depending on $V$ only so that
a.s. we have
$$\gamma n^{2/3}(\mathcal E  I -T_n)\to \sao$$
in the norm-resolvent sense. Here $\mathcal E I -T_n$ acts on $\mathbb R^n\subset L^2(\mathbb R_+)$ with coordinate vectors $e_j=(\vartheta n)^{1/6}{\mathbf 1}_{[j-1,j](\vartheta n)^{-1/3}}$. \end{theorem}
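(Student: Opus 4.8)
\bigskip\noindent{\bf Proposed proof of Theorem \ref{t:main}.}
The plan is to reuse the operator-convergence engine behind Theorem \ref{t:rrv}: the comparison \eqref{mainbound} with the plain Airy operator $\ao$, together with uniform-on-compacts convergence of the ``integrated potential'' of the rescaled matrix. Write $H_n = \gamma n^{2/3}(\mathcal E I - T_n)$ in the embedded coordinates and decompose it into a discrete second-difference part, a slowly varying scalar potential built from the centred recurrence coefficients, and a fluctuation potential. What one needs is: (i) after the substitution $i = t(\vartheta n)^{1/3}$ the second-difference part converges to $-\partial_t^2$ and the scalar potential to $t$; (ii) in a suitable coupling the cumulative fluctuation potential converges uniformly on compacts to $\trb\, b(t)$; and (iii) the discrete analogues of \eqref{mainbound} hold with constants uniform in $n$. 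Granting (i)--(iii), plugging the eigenfunctions of $\sao$ into the approximating forms gives $\limsup_n \lambda_k(n) \le \Lambda_k$ via the Rayleigh quotient, while (iii) provides enough tightness that any weak limit of the discrete eigenfunctions can be tested against $\sao$, giving $\liminf_n \lambda_k(n) \ge \Lambda_k$; this yields norm-resolvent convergence exactly as in the proof of Theorem \ref{t:rrv}.

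The substance of (i)--(iii) is a local analysis of the chain $i \mapsto (a_i,b_i)$ near the top of the spectrum, which for even polynomial $V$ is the $\eta$-Markov chain, $\eta = \deg V/2-1$, with transition law read off from \eqref{chi}; this replaces the elementary asymptotics $\chi_k \approx \sqrt k + N(0,1/2)$ available in the $\beta$-Hermite case. First one identifies the centring: in the relevant corner $(a_i,b_i)$ concentrates on a deterministic trajectory of recurrence coefficients whose boundary value $(\bar a_0,\bar b_0)$ corresponds to the top spectral edge $\mathcal E$; under $V'' > c > 0$ the limiting spectral measure is one-cut with a square-root edge, so $\mathcal E = \bar a_0 + 2\bar b_0$ and the centred scalar potential has a strictly positive linear term in $i/n$. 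The constants $\gamma,\vartheta$ are then pinned down by matching normalisations: the second difference should have coefficient $1$, the linear part of the potential should give exactly $t$, and the limiting fluctuation should have diffusion coefficient exactly $\trb$. That these requirements are simultaneously solvable in the two unknowns $\gamma,\vartheta$ reflects a fluctuation--dissipation feature of the density \eqref{chi}: both the drift and the noise of the chain are controlled by the same factor $\beta n$ in its exponent, so the rescaled noise is automatically of the form $\trb\,b'$ with the correct $\beta$-dependence.

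For the fluctuations one runs an invariance principle. In the window $i = t(\vartheta n)^{1/3}$, $t \in [0,K]$, the chain is a small perturbation of a stationary chain, with per-step drift $O(i/n)$ and per-step noise of size $n^{-1/2}$; a martingale functional central limit theorem then shows that the recentred, cumulatively summed potential fluctuations converge in law to a Brownian motion, and a Skorokhod coupling upgrades this to almost sure uniform convergence on $[0,K]$ -- delivering (i) and (ii). For (iii) one shows that with probability tending to $1$ uniformly in $n$ the chain stays in a fixed compact neighbourhood of $(\bar a_0,\bar b_0)$ throughout the window, so the discrete ``$\int f^2\, dB$'' term is dominated by the discrete Airy form with $n$-independent constants, exactly as in the $\beta$-Hermite proof. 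For general (non-polynomial) $V$ with $V'' > c$ one approximates $V$ by polynomials, or argues the requisite decoupling directly; convexity again furnishes the a priori concentration and excludes outliers that would spoil the bounds.

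The main obstacle is the local chain analysis. Unlike the $\beta$-Hermite case, where the independence of the entries and the explicit $\chi$ and normal laws make everything transparent, here one must handle a genuinely dependent ($\eta$-Markov) chain: establish concentration around the correct centring trajectory, prove a quantitative central limit theorem near a degenerate (edge) point, and do all of this with enough uniformity in $n$ to also produce the operator bound \eqref{mainbound}. Handling non-polynomial $V$ and controlling the short boundary layer near $i = 0$, where the chain has not yet relaxed to the trajectory, are additional but more routine technical points.
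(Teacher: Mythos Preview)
Your framework matches the paper's: both reduce to the sufficient conditions from \cite{RRV} for operator convergence and then verify the key input --- local uniform convergence of the discrete integrated potential $n^{1/3}\sum_{k\le tn^{1/3}}(a_k+2b_k-\mathcal E)\to \tfrac12 t^2+\trb b_t$ --- via a central limit theorem for the $\eta$-Markov chain $(a_i,b_i)$. Two points deserve comment.

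First, where you invoke a generic martingale functional CLT plus Skorokhod coupling, the paper takes a different and, given the explicit density \eqref{chi}, more direct route: it uses that the chain mixes exponentially fast (so reaches local stationarity in $O(\log n)$ steps), and then observes that the local equilibrium is so close to Gaussian that stretches of length $n^{1/2-\eps}$ agree with their Gaussian approximation in \emph{total variation}. The CLT is then obtained by direct comparison of joint densities rather than by martingale arguments, and this strong form simultaneously delivers the uniform operator bounds (your item (iii)). Your martingale route is plausible but would require separately establishing quantitative mixing and variance control; the density-comparison approach gets these for free from the structure of \eqref{chi}.

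Second, the boundary layer near $i=0$ is more than a routine technicality. The chain is \emph{not} started near its local stationary law; as noted in Section~\ref{s:rooted}, the first coordinates of $T$ encode the Jacobi operator of the equilibrium measure for $V$, which is the wrong object at the edge. The paper stresses that the CLT required by the \cite{RRV} criteria consequently fails verbatim: one must truncate the first $c\log n$ rows and columns, prove the CLT for the truncated matrix, and argue separately that this truncation does not perturb the bottom eigenvalues. You should treat this as a necessary step rather than a side remark.
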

\begin{proof}[Proof outline]
In \cite{RRV}, sufficient conditions were given for the convergence of
discrete operators to continuum ones, in particular to  $\sao$. This was done through a more general version of the proof of Theorem \ref{t:rrv}.

The most important condition is that if $\mathcal E$ is the top edge of the equilibrium measure associated with the potential $V$, then the discrete version of the integrated potential converges to the continuum one, locally uniformly:
$$
   n^{1/3}
  \sum_{k= 1}^{ \lfloor t n^{1/3} \rfloor } ( a_k + 2 b_k -\mathcal E) \rightarrow \frac{1}{2} t^2 + \trb b_t
$$
This amounts to having to show a central limit theorem for the $\eta$-Markov chain $(a_i,b_i)$ (we will drop the prefix $\eta$). \begin{itemize}
\item The Markov chain is time-inhomogeneous because of the coefficients of the $b$-terms. However, these change on the scale of order $n$, while

\item the Markov chain mixes exponentially fast, so in logarithmic number of steps it gets to its (local) stationary measure, which can be approximated using a homogeneous version of the problem.

\item the local equilibrium measure is extremely close to Gaussian. Indeed, the joint distribution of stretches of length $n^{1/2-\eps}$ are close in total variation to their Gaussian approximation! So the CLT is true in a very strong sense, and is proved by comparing joint densities.

\item The Markov chain is {\it  not} started from its local stationary distribution at $i=1$. In fact, the first coordinates of the matrix $T$ encode the local equilibrium measure for $V$ just as they do in the $\beta$-Hermite case. Indeed, the limit of the right end of $T$ is the Jacobi operator for the equilibrium measure associated to the potential $V$! See Section \ref{s:rooted}.

\item Thus the CLT as required by the \cite{RRV} criteria does not hold verbatim.  It does hold for $T$ truncated after the first $c\log n$ coordinates, and it can be shown that the truncation does not make a significant difference.
\end{itemize}
\end{proof}

BY now, universality of the $\beta$-ensemble edge eigenvalues has other proofs, some more general,  see \cite{bourgade2013edge}, \cite{bekerman2013transport}. For the Jacobi ensembles, see \cite{holcomb2012edge}.

\begin{question}[Formulas] There exists asymptotic formulas for
correlations and other statistics of the edge and bulk processes,
see for example \cite{desrosiers}. Can these be connected to the limiting operators directly?
\end{question}

\noindent{\bf Exotic edge operators.} We saw in Section \ref{s:rooted}, that empirical distribution of eigenvalues of $T_n$, without scaling, converge to the classical equilibrium measure form potential theory corresponding to $V$.

The convexity and analyticity of $V$ forces this measure to have a density which is decays like $x^{1/2}$ at the edges. As one might guess, this $x^{1/2}$ is crucial for the $\sao$ limit.

When $V$ is analytic, the possible decay rates are $x^{2k+1/2}$ for some integer $k$. The more detailed analysis of universality in \cite{KRV} lead us to the following conjecture. See \cite{RRV} for a more precise version, and a detailed explanation from where the conjectured limit comes from.

\begin{conjecture}
After scaling, $T_n$ converges to the random operator
$$
   \mathcal{S}_{\beta, k} =  - \partial_t^2  + t^{\frac{1}{2k+1}} +  \tfrac{2}{\sqrt{\beta}} \,t^{- \frac{k}{2k+1}} \,b'_t.
$$
\end{conjecture}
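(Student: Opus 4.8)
The plan is to follow the route used for Theorem \ref{t:main}, pushing the orthogonal‑polynomial / $\eta$‑Markov‑chain analysis of \cite{KRV} into the non‑convex regime. First one pins down the scaling. If the equilibrium density of $V$ vanishes like $(\mathcal E-x)^{2k+1/2}$ at the top edge $\mathcal E$, then the integrated density of states near the edge is of order $(\mathcal E-x)^{2k+3/2}$, so the right energy window is $n^{-2/(4k+3)}$ and, matching the discretized Laplacian, the natural index scale is $N_n=n^{1/(4k+3)}$; for $k=0$ these are $n^{-2/3}$ and $n^{1/3}$, recovering Theorem \ref{t:rrv}. One then expects the precise statement: there is a coupling so that a.s.
$$
n^{2/(4k+3)}\bigl(\mathcal E I-T_n\bigr)\ \longrightarrow\ \mathcal S_{\beta,k}
$$
in the norm‑resolvent sense, where $\mathcal E I-T_n$ acts on $\mathbb R^n\subset L^2(\mathbb R_+)$ with $e_j=N_n^{1/2}{\mathbf 1}_{[j-1,j]N_n^{-1}}$ and $\mathcal S_{\beta,k}$ carries a Dirichlet condition at $0$. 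As a consistency check, $\mathcal S_{\beta,0}=\sao$.

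Second, one makes sense of $\mathcal S_{\beta,k}$ as a self‑adjoint operator with discrete spectrum bounded below, by establishing the analogue of the sandwich \eqref{mainbound}: for every $\eps>0$ there is a random $C$ with
$$
-C+(1-\eps)\,\mathcal A_k\ \le\ \mathcal S_{\beta,k}\ \le\ (1+\eps)\,\mathcal A_k+C,\qquad \mathcal A_k:=-\partial_t^2+t^{1/(2k+1)}.
$$
The content is the one‑dimensional estimate $\bigl|\int_0^\infty f^2\,t^{-k/(2k+1)}\,dB\bigr|\le C\|f\|_2^2+\eps\,\langle f,\mathcal A_k f\rangle$. Since $k/(2k+1)<1$ the weight is locally integrable; after integration by parts the only dangerous term carries $t^{-k/(2k+1)-1}f^2$, and the Dirichlet condition $f(0)=0$ together with $f(t)^2\le t\|f'\|_2^2$ tames it near the origin. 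This step also supplies the tightness needed later and shows $\Lambda_j(\mathcal S_{\beta,k})/\Lambda_j(\mathcal A_k)\to1$, so that the spectral growth rate is fixed by the classical (WKB) asymptotics of the zeros of the decaying solution of $y''=t^{1/(2k+1)}y$, which gives $\Lambda_j\asymp j^{2/(4k+3)}$.

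The heart of the proof is a central limit theorem for the time‑inhomogeneous $\eta$‑Markov chain $i\mapsto(a_i,b_i)$ near the degenerate edge, in the strong form
$$
n^{1/(4k+3)}\sum_{j=1}^{\lfloor tN_n\rfloor}\bigl(a_j+2b_j-\mathcal E\bigr)\ \longrightarrow\ c_1\,t^{\frac{2k+2}{2k+1}}+\trb\int_0^t s^{-\frac{k}{2k+1}}\,db_s,
$$
locally uniformly in $t$ on a common probability space, for an explicit constant $c_1$. The new phenomenon compared with the convex case is that, at distance $m$ from the edge, the per‑step mean of the chain drifts like $(m/n)^{1/(2k+1)}$ while its effective per‑step variance behaves like $m^{-2k/(2k+1)}n^{-1/(2k+1)}$; this $m^{-2k/(2k+1)}$ blow‑up is exactly what produces the singular coefficient $t^{-k/(2k+1)}$ in $\mathcal S_{\beta,k}$, and for $k=0$ it reduces to the constant variance $\tfrac{4}{\beta n}$ of $a_j+2b_j$ in the $\beta$‑Hermite case. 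Establishing these asymptotics needs the local equilibrium measure of the chain at a higher‑order edge, which is outside the convex analysis and calls for orthogonal‑polynomial asymptotics at such an edge (in the spirit of Riemann--Hilbert analysis for one‑cut potentials with a degenerate right endpoint). One then runs the \cite{KRV} scheme: exponential mixing to the local equilibrium in $O(\log n)$ steps, total‑variation comparison of stretches of the stationary chain with Gaussians, and truncation of the first $c\log n$ coordinates (the right end of $T_n$), whose limit is still the Jacobi operator of the equilibrium measure of $V$. Feeding this CLT into a version of the \cite{RRV} convergence criteria adapted to a limiting operator that is singular at the Dirichlet endpoint, the $\limsup/\liminf$ argument from the proof of Theorem \ref{t:rrv} then closes the loop.

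I expect the main obstacle to be twofold and intertwined: first, the local equilibrium analysis of the $\eta$‑Markov chain at a degenerate edge — a CLT with the correct singular variance profile together with mixing estimates that deteriorate only polynomially as $m\to0$, since the chain effectively changes temperature as it approaches the edge; and second, accommodating the singular noise coefficient $t^{-k/(2k+1)}b'_t$ both in the definition of $\mathcal S_{\beta,k}$ near the boundary and in the convergence criteria, because the first $O(\log n)$ coordinates of $T_n$ — which must be handled separately in any case — are precisely those carrying the blown‑up noise. A sensible warm‑up is the first non‑generic case, with $V$ tuned so that $k=1$, where the equilibrium measure and its edge asymptotics remain explicit.
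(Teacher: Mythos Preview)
The statement you are attempting to prove is labeled a \emph{Conjecture} in the paper, not a theorem: the paper provides no proof. It says only that the detailed analysis in \cite{KRV} motivates the conjectured form of $\mathcal S_{\beta,k}$, refers to \cite{RRV} for a more precise statement and heuristic explanation, and notes that for $\beta=2$ the eigenvalue limits were studied in \cite{Claeys} via Riemann--Hilbert methods. There is therefore nothing in the paper to compare your argument against.

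Your proposal is a coherent outline of how one would hope to extend the \cite{KRV}/\cite{RRV} machinery to the degenerate edge, and the scaling heuristics and the form of the sandwich bound are reasonable. But what you have written is a strategy, not a proof: the two items you yourself flag as ``the main obstacle'' --- the local equilibrium and CLT for the $\eta$-Markov chain at a higher-order edge with the correct singular variance profile, and the analysis of $\mathcal S_{\beta,k}$ with the blown-up noise $t^{-k/(2k+1)}b'_t$ near the Dirichlet endpoint --- are exactly the missing ingredients that keep this a conjecture. In particular, the convex-potential analysis of \cite{KRV} relies on $V''>c>0$ for the mixing and Gaussian-approximation steps, and you have not indicated how to replace this in the non-convex setting beyond invoking Riemann--Hilbert asymptotics ``in spirit''; nor have you verified that the \cite{RRV} convergence criteria can actually be adapted to a limit operator whose noise coefficient is singular at the boundary. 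Until those steps are carried out, the argument does not close.
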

For $\beta=2$ the eigenvalue limits have been studied in \cite{Claeys} via the Riemann-Hilbert approach.

\section{Bulk limits -- the Brownian carousel}
\label{s:carousel}

The goal of this section is to describe the limit of the
$\beta$-Hermite ensembles in the bulk.

First, for motivation, we review some history. The nonlinear transformation $(a,b) \to (\lambda,q)$ of Section \ref{s:tridiagonal} is fundamental in several areas, including orthogonal polynomial theory, the Toda lattice, and more generally,
integrable systems and inverse spectral theory. It goes beyond tridiagonal matrices and point measures. A beautiful generalization,
is the theory of {\bf canonical systems}, where the correspondence is between certain matrix-valued ``potentials'' and measures on $\mathbb R$. Canonical systems are a one-parameter families of differential equations of the form
$$
\lambda R_t f=Kf',\qquad \mbox{ on }[0,\eta), \qquad K=\mat{0}{-1}{1}{0}.
$$
where $R$ is a nonnegative definite $2\times 2$ matrix-valued function from $[0,\eta)$, and $f$ takes values in $\mathbb R^2$ on the same interval. When $R$ is invertible everywhere, then the canonical system corresponds to the eigenvalue problem of the Dirac operator
\begin{equation}
  \label{dirac}
R^{-1}K\partial_t
\end{equation}
which is symmetric with respect to the inner product
$$
\langle f,g\rangle= \int_0^\eta f_t^\dagger R_t g_t\,dt.
$$
A theory canonical systems was developed by de Branges \cite{de1968hilbert} in conjunction with generalizing the concept of Fourier transform.

The Hilbert-P\'olya conjecture seeks to prove the Riemann hypothesis by finding a self-adjoint operator whose
eigenvalues are the zeros $Z$ of $\zeta(1/2+iz)$ for the Riemann zeta function $\zeta$. A famous attempt at proving the Riemann hypothesis was made by de Branges, using Dirac operators corresponding to canonical systems.

On the other hand, the Montgomery conjecture \cite{mont} claims that as $t\to infty$,
the random set $(Z-Ut)\log t$, where $U$ is a uniform random variable on $[0,1]$, converges to the Sine$_2$ process, defined as the limit of eigenvalue process of the GUE in the bulk.

A natural question is whether there exists an operator  (coming from canonical system) whose eigenvalues are give the Sine$_2$ process. The first theorem from \cite{VV3} answers this in the affirmative, for all $\beta$. The operator we describe here is conjugate to a canonical Dirac operator via a Cayley transform, see \cite{VV3}, but the present form is more convenient for analysis.

Consider the hyperbolic Brownian motion in the Poincar\'e disk satisfying the SDE
\begin{equation}\label{e:hbm}
d\mathcal B = \frac{1}{\sqrt{\beta(1-t})}(1-\mathcal B)dZ
\end{equation}
where $Z$ is a complex Brownian motion with independent standard real and imaginary parts, and the time scaling corresponds to logarithmic time. Let
\begin{equation}\label{e:Xdef}
X_t =\frac{1}{\sqrt{1-|\mathcal B(t)|^2}}
\mat{1}
{\mathcal B(t)}
{\overline{\mathcal B}(t)}
{1},
\qquad J=\mat{-i}{0}{0}{i}.
\end{equation}
Define the Brownian carousel operator as
\begin{equation}\label{e:carop}\mathcal C_\beta = J \,X_t^2\, \partial_t \qquad \mbox{on } [0,1).
\end{equation}
with boundary conditions $f(0)\parallel (1,1)^\dagger$ and
$f(1)\parallel (\mathcal B(\infty),1)^\dagger$ (since $\mathcal B$ converges to a point on the unit circle).
We will see that $2\mathcal C_\beta$ has a discrete set of eigenvalues with a translation-invariant distribution. It is called the $\Sineb$ process.

Then we have
\begin{theorem}[\cite{VV3}]\label{t:bulk}
Fix $\nu \in(-2,2)$.
There exists unitary matrices so that for the $\beta$-Hermite tridiagonal matrices $T_n$
$$
\sqrt{1-\nu^2}\;O_n(T_n-\nu I)O_n^{-1}\rightarrow \mathcal C_\beta
$$
where $T_n$ acts on the $\mathbb C^n$ as a subspace of complex 2-vector-valued functions on $[0,1)$. The convergence is in the norm-resolvent sense; in particular eigenvalues converge and eigenvectors converge in norm.
\end{theorem}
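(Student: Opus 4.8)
The plan is to follow the transfer-matrix (Pr\"ufer/Riccati) route that underlies the Sine$_\beta$ theory, but lifted from a statement about eigenvalue \emph{dynamics} to one about an \emph{operator}. First I would rewrite the eigenvalue equation $T_n u=\lambda u$, with $\lambda$ in the bulk near $\nu$, as a product of $2\times 2$ transfer matrices acting on the vectors $(u_k,u_{k+1})$. Since $\nu\in(-2,2)$ these transfer matrices are, to leading order, conjugate to a fixed rotation by an angle $\theta$ with $2\cos\theta=\nu$; the randomness in the diagonal entries $a_k\sim N(0,2/n\beta)$ and off-diagonals $b_k\sim\chi_{(n-k)\beta}/\sqrt{n\beta}$ enters only as an $O(n^{-1/2})$ perturbation of that rotation. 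The unitaries $O_n$ in the statement are the block-diagonal rotations that continuously ``un-wind'' the deterministic rotation $k\theta$, so that after conjugation the cocycle is driven purely by the accumulated noise, and the scaling constant (here written $\sqrt{1-\nu^2}$) is the one that normalizes the semicircle density at $\nu$; the embedding $\mathbb C^n\subset L^2([0,1),\mathbb C^2)$ places consecutive pairs $(u_k,u_{k+1})$ at continuum time $t\approx k/n$.

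The analytic heart is a diffusion approximation. Under a Skorokhod coupling I would show that the conjugated transfer cocycle, viewed as an $\mathrm{SU}(1,1)$-valued path acting on the Poincar\'e disk, converges almost surely and locally uniformly in the spectral parameter to the cocycle generated by $X_t$ built from the hyperbolic Brownian motion $\mathcal B$ of \eqref{e:hbm}. Concretely: the fluctuations of $b_k$ about $\sqrt{(n-k)/n}\approx\sqrt{1-t}$, together with those of $a_k$, aggregated over a mesoscopic number of steps, produce the complex Brownian increment $dZ$, while the deterministic factor $\sqrt{(n-k)\beta/(n\beta)}$ produces the coefficient $1/\sqrt{\beta(1-t)}$. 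This is a martingale-CLT / Stroock--Varadhan argument requiring uniform moment bounds on the $\chi$ increments --- morally the same ``strong CLT for the $(a_i,b_i)$ chain'' already used in the proof of Theorem \ref{t:main}, now carried out in the oscillatory regime. One then checks that $\mathcal C_\beta=J X_t^2\partial_t$ is merely the repackaging of this limiting cocycle: the ODE $J X_t^2 f'=\lambda f$ has a solution flow conjugate, via the map sending $f(t)$ to its direction in $\mathbb P^1$, to the Brownian carousel, so its eigenvalue problem with the stated boundary data is exactly the $\lambda$-level set of the carousel phase.

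The remaining steps are the boundary conditions and the passage to norm-resolvent convergence. At the left end the first transfer step pins $f(0)\parallel(1,1)^\dagger$; at the right end one must treat $k$ near $n$, where $b_k$ degrades to order $n^{-1/2}$, the transfer matrices become nearly parabolic and force the solution direction toward the boundary circle, matching $f(1)\parallel(\mathcal B(\infty),1)^\dagger$ since $\mathcal B(t)$ converges to a point of $\partial\mathbb D$. Finally, to upgrade convergence of the cocycle (equivalently, of the resolvent kernels) to genuine norm-resolvent convergence with eigenvectors converging in norm, I would invoke a general criterion in the spirit of \cite{RRV}, adapted to this Dirac / canonical-system setting: cocycle convergence plus an oscillation-theory identity --- the discrete analogue of $\#\{\text{eigenvalues}<\lambda\}=\#\{\text{times the phase winds past the boundary}\}$ --- gives tightness of the low-lying spectrum, and compactness of the resolvent of $\mathcal C_\beta$ closes the argument.

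The step I expect to be the main obstacle is the diffusion approximation made uniform in the spectral parameter \emph{together with} the degeneration as $t\to1$: the diffusion coefficient $1/\sqrt{\beta(1-t)}$ blows up at the right endpoint, so one must show the hyperbolic Brownian motion still converges to the circle with enough quantitative control, and that the discrete chain near $k=n$ --- which is precisely where the bulk scaling of this theorem hands over to the edge scaling of Theorem \ref{t:rrv} --- contributes nothing spurious. A secondary difficulty is formulating the right general operator-convergence theorem so that cocycle convergence really does imply norm-resolvent convergence; this abstract bridge is where most of the technical weight of \cite{VV3} lies.
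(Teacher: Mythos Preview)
The paper does not contain a proof of this theorem: immediately after the statement it says ``We will not describe the proof of Theorem~\ref{t:bulk} here. Instead, we will explain how this operator arises as a limit of lifts of (random) unitary matrices.'' What the paper offers in lieu of a proof is the circular-$\beta$ analogue (Sections~9--11): the Szeg\H{o} recursion is rewritten so that the Verblunsky data become a hyperbolic random walk $b_k$, the associated Dirac-type operator $JX_t^2\partial_t$ is constructed for finite $n$, and then a coupling argument shows $b_n(t)\to\mathcal B(t)$, from which the operator limit follows. The Hermite proof itself is deferred to \cite{BVBV} (phase-function level) and \cite{VV3} (operator level).

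Your outline is consistent with that program and with what is actually done in \cite{BVBV} and \cite{VV3}: transfer matrices in the bulk are conjugate to a rotation by $\theta$ with $2\cos\theta=\nu$; the unitaries $O_n$ unwind this rotation; the residual $O(n^{-1/2})$ noise, after a martingale CLT, drives a hyperbolic Brownian motion with the $1/\sqrt{\beta(1-t)}$ time change coming from the profile of the $b_k$; and the eigenvalue ODE for $\mathcal C_\beta$ is the continuum version of the unwound transfer recursion. You have also correctly located the two genuine technical pressure points --- the behaviour as $t\to 1$ where the diffusion coefficient blows up (and the hyperbolic Brownian motion must be shown to hit $\partial\mathbb D$), and the abstract step from cocycle convergence to norm-resolvent convergence --- which are precisely the places where \cite{VV3} does the real work. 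So there is nothing to correct: there is simply no proof in the paper to compare against, and your sketch is the right one.
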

A version of this theorem, for unitary matrices (and for the associated phase function instead  of the operator) was given Killip and Stoicu \cite{KS}. In \cite{BVBV} a phase function version is proved. The full operator convergence is shown in \cite{VV3}.

\bigskip\noindent{\bf The Brownian carousel as a geometric evolution.} Writing the eigenvalue equation for $\mathcal C_\beta$ as
$$
\partial_t g= -\lambda J^{X_t^{-1}}g , \qquad g(0)=(1,1)^\dagger.
$$
Shows that $\mathcal P  g_t=e^{i\gamma_t}$, a point on the unit circle, is rotated at speed $\lambda$  about the moving center
$\mathcal P \mathcal B(t)$. In particular, $\gamma$ satisfies
\begin{equation}
\partial_t \gamma=\lambda \frac{ |e^{i \gamma}-\mathcal{B}|^2}{1-|\mathcal{B}|^2}, \qquad \gamma(0)=0.\label{odegamma}
\end{equation}
Oscillation theory tells us that the
number of eigenvalues in the interval $[0,\lambda]$ equals the number of times $e^{i\gamma}$ visits the point $\mathcal B(\infty)$. This process is called the Brownian carousel, introduced in \cite{BVBV} before the discovery of the operator $\mathcal C_\beta$.

We will not describe the proof of Theorem \ref{t:bulk} here. Instead, we
will explain how this operator arises as a limit of lifts of (random) unitary matrices. Then we present some applications to approximating eigenvalue statistics. Finally, we will discuss a related model, 1-dimensional critical random Schr\"odinger operators.

\section{An operator and a path associated with unitary matrices}

The goal of this section is to parameterize the spectrum of a unitary matrix in a way that it will be apparent already for finite $n$ what the limiting operator will be. In fact, we construct a Dirac operator whose spectrum is the lifting of that of $U$. Moreover, the operator depends on a piecewise constant path in the hyperbolic plane. If this path has a limit as $n\to\infty$ (and some tightness conditions are satisfied) then so will the associated Dirac operator.

As it turns out, in the circular beta case the parameter path is just a random walk in the hyperbolic plane! Hence the limit will be the operator parameterized by hyperbolic Brownian motion.

The construction is based on the Szeg\H o recursion, which we will briefly review here.

Let $U$ be a unitary matrix of dimension $n$, and assume that for some unit vector $e$, the
vectors $e, Ue, \ldots U^{n-1}e$ form a basis. There is a unique way to apply Gram-Schmidt
to orthonormalize this basis so that we get
$$
\Phi_0(U)e, \ldots, \Phi_{n-1}(U)e
$$
where $\Phi_k$ is a monic degree $k$ polynomial. Define $\Phi_n$ to be monic of degree $n$ so that
$\Phi_n(U)e=0$; this implies that $\Phi_n(z)=\det(z-U)$ the characteristic polynomial of $U$.  Writing
$$\Phi_k(z)=z^k+a_{k-1}z^{k-1} +\ldots +a_0$$ we define
$$\Phi^*_k(z)=\bar a_0z^k +\bar a_1z^{k-1} +\ldots +\bar a_k=z^{k}\overline{\Phi_k(1/\bar z}).$$
Now note that
$$\langle \Phi_k^*(U)e, U^j e \rangle=\sum_{i=0}^k
\bar a_i \langle U^{k-i}e,U^j e\rangle = \sum_{i=0}^k
\bar a_i \langle U^{k-j}e,U^{i} e\rangle = \overline{\langle\Phi_{k}(U)e,U^{k-j}e}\rangle.
$$
By construction, $u=\Phi_k(U)e$ is perpendicular to $e,\ldots, U^{k-1}e$, it follows
that $\Phi^*_k(U)e$ is perpendicular to $Ue,\ldots, U^{k}e$. However,
so is $v=\Phi_{k+1}(U)e-U\Phi_{k}(U)e$ (as each term is, by construction).
Now $u,v$ are in the span of $e,\ldots, U^ke$, so they must
be collinear. Following tradition we choose $\alpha_k$, the so-called Verblunski coefficients,  so that
\begin{equation}
  \label{rec}
\Phi_{k+1}-z\Phi_{k} = -\bar\alpha_k \Phi^*_k,
\end{equation}
namely
$$
-\bar \alpha_k = \frac{\langle u,v \rangle}{\langle u,u\rangle} =
\frac{\langle \Phi_k^*(U)e,-U\Phi_k(U)e\rangle }{\|\Phi^*_k(U)e\|^2}.
$$
Since $\Phi_k^*(U)e$ and $U\Phi_k(U)e$ have the same length, we see that
$|\alpha_k|\le 1$.
We then get the celebrated
Szeg\H o recursion
\begin{align*}
\binom{\Phi_{k+1}(z)}{\Phi^*_{k+1}(z)}=A_k Z\binom{\Phi_{k}(z)}{\Phi^*_{k}(z)}, \qquad \Phi^*_0(z)=\Phi_0(z)=1,
\end{align*}
with the matrices
\begin{align*}
\qquad A_k=\mat{1}{-\bar \alpha_k }{-\alpha_k}{1}
, \qquad Z=\mat{z}{0}{0}{1}.
\end{align*}
Note that $z$ is an eigenvalue if and only if $\Phi_n(z)=0$, equivalently by \eqref{rec} we have
\begin{equation}\label{e:evcrit}
Z \binom{\Phi_{n-1}(z)}{\Phi^*_{n-1}(z)}=ZA_{n-2}Z\cdots Z A_0 Z \binom{1}{1} \parallel
\binom{\bar \alpha_{n-1}}{1}.
\end{equation}

Using the Verblunski coefficients, we can define a new set of parameters
\begin{equation}\label{e:bcoord}
b_k = \mathcal P A_{0}^{-1} \ldots A_{k-1}^{-1} \binom{0}{1}, \qquad 0\le k <n-1
\end{equation}
where $\mathcal P \binom{x}{y}=x/y$, and
$$
b_* = \mathcal P A_{0}^{-1} \ldots A_{n-2}^{-1} \binom{\bar \alpha_{n-1}}{1}.
$$
Then $b_0=0$ and the parameters $(b_1,\ldots, b_{n-1},b_*)$ encode the same information as the $\alpha_i$. This is exactly the information contained in the spectral measure $\sum_{j=1}^n w_j \delta_{e^{i\lambda_j}}$.
\begin{theorem}
Consider the measure $\sum_{j=1}^n w_j \delta_{e^{i\lambda_j/n}}$  supported on $n$ points on the unit circle, and consider the $b$-coordinates \eqref{e:bcoord}. For $t\in[0,1]$ let $b(t)=b_{\lfloor tn\rfloor}$, and let
$$X_t =\frac{1}{\sqrt{1-|b(t)|^2}}\mat{1}{b(t)}{\bar b(t)}{1}, \qquad J=\mat{-i}{0}{0}{i}.
$$
 Then the operator
\begin{equation}\label{e:uoperator}
JX^2_t\partial_t
\end{equation}
acting on functions $f:[0,1]\rightarrow \mathbb C^2$ with the
boundary conditions $f_1(0)=f_2(0)$ and $f_1(1)=f_2(1)b_*$ has discrete spectrum and the eigenvalues
are  $\lambda_i/2+ \pi n \mathbb Z$.
\end{theorem}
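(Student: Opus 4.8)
The plan is to read the spectrum off a transfer-matrix analysis of the eigenvalue ODE and match it to the Szeg\H o recursion. Write $JX_t^2 f'=\mu f$ as $f'=\mu X_t^{-2}J^{-1}f$. Since $b(t)$ is piecewise constant, equal to $b_k$ on $[k/n,(k+1)/n)$, the solution propagates by matrix exponentials, so with $\mathcal T_k(\mu):=\exp\!\big(\tfrac\mu n X_{b_k}^{-2}J^{-1}\big)$ one has $f(1)=\mathcal T(\mu)\,f(0)$, where $\mathcal T(\mu):=\mathcal T_{n-1}(\mu)\cdots\mathcal T_0(\mu)$. The one-line $2\times2$ identity $X_bJX_b=J$ (valid for all $|b|<1$) gives $X_b^{-2}J^{-1}=X_b^{-1}J^{-1}X_b$, hence
$$\mathcal T_k(\mu)=X_{b_k}^{-1}\,R_{\mu/n}\,X_{b_k},\qquad R_\theta:=\mat{e^{i\theta}}{0}{0}{e^{-i\theta}};$$
each step is thus a rotation conjugated by $X_{b_k}$, and writing $R_{\mu/n}=e^{-i\mu/n}Z$ with $Z=\mat{z}{0}{0}{1}$ and $z=e^{2i\mu/n}$ puts the diagonal factor into the form appearing in the Szeg\H o recursion at a point $z$ on the unit circle. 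The boundary condition $f_1(0)=f_2(0)$ says $f(0)\parallel\binom{1}{1}$, the Szeg\H o initial vector $\binom{\Phi_0}{\Phi_0^*}$; the condition $f_1(1)=f_2(1)b_*$ says $f(1)\parallel\binom{b_*}{1}$. So $\mu$ is an eigenvalue exactly when the entire function $\phi(\mu):=\big(\mathcal T(\mu)\binom{1}{1}\big)_1-b_*\big(\mathcal T(\mu)\binom{1}{1}\big)_2$ vanishes.

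The crux is to show that $\phi$ vanishes exactly where $z=e^{2i\mu/n}$ is a zero of the characteristic polynomial $\Phi_n$ of the unitary matrix with spectral measure $\sum_j w_j\delta_{e^{i\lambda_j/n}}$. For this I would use the $SU(1,1)$ dictionary between the $b$-coordinates and the Verblunsky coefficients. For $j<n-1$ each $A_j^{-1}$ is a positive scalar times an element of $SU(1,1)$, so $P_k:=A_0^{-1}\cdots A_{k-1}^{-1}=\rho_k X_{b_k}R_{\psi_k}$ for a scalar $\rho_k>0$ and a rotation $R_{\psi_k}$, because both $P_k$ and $X_{b_k}$ carry the origin of the disk to $b_k$ by \eqref{e:bcoord}; the recursion $P_{k+1}=P_kA_k^{-1}$ then becomes $R_{-\psi_{k+1}}X_{b_{k+1}}^{-1}X_{b_k}R_{\psi_k}=(1-|\alpha_k|^2)^{-1/2}A_k$. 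Feeding these relations into the product $\mathcal T_{n-1}(\mu)\cdots\mathcal T_0(\mu)$, the interior occurrences of $X_{b_k}$ telescope against the $A_k$'s; the endpoint data — $X_{b_0}=I$ on the left, and $\binom{b_*}{1}\parallel P_{n-1}\binom{\bar\alpha_{n-1}}{1}$ on the right (the definition of $b_*$) — turn the condition $\phi(\mu)=0$ into precisely the eigenvalue criterion \eqref{e:evcrit}, $Z\binom{\Phi_{n-1}(z)}{\Phi_{n-1}^*(z)}\parallel\binom{\bar\alpha_{n-1}}{1}$ with $z=e^{2i\mu/n}$. By \eqref{e:evcrit} that criterion is equivalent to $\Phi_n(z)=0$, as desired. (If one prefers to avoid the bookkeeping, the proportionality of $\phi(\mu)$ to $\Phi_n(e^{2i\mu/n})$ can instead be verified by a direct expansion of the transfer-matrix product; this is painless for small $n$.)

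Consequently $\mu$ is an eigenvalue of the operator iff $\Phi_n(e^{2i\mu/n})=0$, i.e. iff $e^{2i\mu/n}=e^{i\lambda_j/n}$ for some $j$, i.e. iff $\mu\in\lambda_j/2+\pi n\,\mathbb Z$. This is an explicit discrete set — equivalently, the spectrum is the zero set of the nonconstant entire function $\mu\mapsto\Phi_n(e^{2i\mu/n})$ — which is the discreteness and the eigenvalue list claimed.

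I expect the main obstacle to be the telescoping in the second paragraph: shepherding the rotations $R_{\psi_k}$ and the normalizing scalars so that $\prod_k X_{b_k}^{-1}R_{\mu/n}X_{b_k}$ collapses onto the Szeg\H o transfer, and in particular the last step, where $|\alpha_{n-1}|=1$, so that $A_{n-1}$ is singular, $b_*$ lies on the unit circle, and \eqref{e:evcrit} must be read as a single linear relation rather than an honest matrix identity. Everything else — the interval-by-interval reduction, the conjugation identity $X_bJX_b=J$, and reading off the spectrum — is routine.
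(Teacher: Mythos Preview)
Your proposal is correct and follows essentially the same route as the paper: solve the piecewise-constant eigenvalue ODE on each subinterval by a matrix exponential, use the identity $X_bJX_b=J$ to recognize each step as a conjugated rotation, and then match the resulting product to the Szeg\H o transfer \eqref{e:evcrit} via the relation between $X_{b_k}$ and $P_k:=A_0^{-1}\cdots A_{k-1}^{-1}$.

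The only substantive difference is organizational, and it bears directly on what you flag as ``the main obstacle.'' The paper runs the matching from the Szeg\H o side rather than from the ODE side: it rewrites the Szeg\H o product by conjugating each factor $Z$ in place, obtaining a recursion $\Gamma_{k+1}=Z^{A_{k-1}\cdots A_0}\Gamma_k$ in which $\Gamma_k$ is the Szeg\H o vector premultiplied by $P_{k-1}$. The point is then that the conjugator $A_{k-1}\cdots A_0=P_k^{-1}$ may be replaced by $X_{b_k}^{-1}$ \emph{step by step}, not globally: your own decomposition $P_k=\rho_kX_{b_k}R_{\psi_k}$ shows that $P_k$ and $X_{b_k}$ differ by a diagonal factor, and since $Z$ is diagonal, that factor commutes through and cancels in each conjugation. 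So the rotations $R_{\psi_k}$ never need to be shepherded across the whole product; they disappear one at a time. This collapses your ``telescoping'' to a one-line observation. Your concern about the final step is likewise moot: $A_{n-1}$ never appears as a transfer factor (the product stops at $A_{n-2}$), and $\alpha_{n-1}$ enters only through the projective boundary direction $\binom{b_*}{1}\parallel P_{n-1}\binom{\bar\alpha_{n-1}}{1}$, exactly as you wrote.
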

\begin{proof}
We skip the standard proof of self-adjointness, see \cite{VV3}.
Instead of the Szeg\H o recursion, we can follow the evolution of
$$
\Gamma_k=Z^{A_{k-2}\ldots A_0} \cdots Z^{A_0}Z\binom{1}{1},
$$
so that
$$\Gamma_0=\binom{1}{1},\quad \Gamma_1=Z\binom{1}{1},\quad \Gamma_2=Z^{A_0}Z\binom{1}{1},\quad\ldots$$
which, geometrically is a repeated rotation of the vector around
a moving center given by $b_k$, and
$$\Gamma_{k+1}=Z^{A_{k-1}\ldots A_0} \Gamma_k  =Z^{X_{k/n}^{-1}}\Gamma_k
$$
Since $J$ is an infinitesimal rotation element around $0$, with $z=e^{i\lambda/n}$ the solution $\Gamma(t)$ of the ODE
$$
\partial_t \Gamma(t) = -\frac {\lambda}{2} J^{X_t^{-1}} \Gamma(t),   \qquad \Gamma(0) =\binom{1}{1}
$$
satisfies $\Gamma(k/n)=e^{-ik/2n}\Gamma_k$ for $k=0,\ldots, n$. But since $X_tJX_t^*=J$, $X_t=X_t^*$ and $J^2=-I$, this ODE is just the eigenvalue equation at $\lambda/2$ of $JX_t^2\partial t$. Note also that $\Gamma(1)$ is parallel to the middle term of \eqref{e:evcrit}, so the boundary condition is also correct. \end{proof}

\section{The path parameter for circular $\beta$}

We now look at the circular $\beta$ ensembles. Their joint eigenvalue density is proportional to  Vandermonde to the power $\beta$. What we need is that for this eigenvalue distribution we can take the  $\alpha_k$ to be rotationally symmetric, independent with
$$
|\alpha_k^2|\sim \operatorname{Beta}\big[1,(n-k-1)\beta/2\big]
$$
with $\alpha_{n-1}$ uniform on the circle, as shown by Killip and Nenciu \cite{KillipNenciu}.
The evolution of $b_k$ is
$$
b_{k+1}=A_k^{A_{k-1}\cdots A_0}.b_k
$$
where the $A_k$ are now to be understood as linear fractional transformations, or, equivalently, hyperbolic automorphisms in the Poincar\'e model.

Note that $A_k$ moves the origin to a rotationally invariant random location, and so $A_k^{A_{k-1}\cdots A_0}$ moves $b_k$ to a rotationally invariant random location around $b_k$. In particular, $b_k$ is just a random walk in the hyperbolic plane that can be described alternatively as follows. Let $b_0=0$. Given $b_k$, pick a point uniformly on the hyperbolic circle around $b_k$ whose radius equals the hyperbolic distance $d_k$ of $0$ and a random variable with the same distribution as $|\alpha_k|$.

Given a hyperbolic Brownian motion path $B$, this method suggest an efficient coupling. First pick $d_1,\ldots, d_{n-1}$, let $b_0=0$, $t_0=0$, and given $b_k,t_k$ let $t_{k+1}$ be the first time that
$\operatorname{dist}(B_t,b_k)=d_{k+1}$. Let $b_{k+1}=B(t_{k+1})$.

Given this coupling, it is now straightforward to show that the path
$b_n(t) \to \mathcal B(t)$ a.s. uniformly on compacts, for $\mathcal B$ defined in \eqref{e:hbm}.
With an additional tightness argument, we get
\begin{theorem}[\cite{VV3}]
The operators $\mathcal C_{\beta,n}$ defined by \eqref{e:uoperator} with paths $b_n$ coupled as above, converge
in the norm-resolvent sense to the limit $\mathcal C_\beta$ of \eqref{e:carop}. In particular, the circular $\beta$ eigenvalue process converges to the eigenvalues of $C_\beta$.
\end{theorem}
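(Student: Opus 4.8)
The key structural observation is that every operator under discussion is an instance of the single Dirac-type operator $JX_t^2\partial_t$ of \eqref{e:uoperator}: the finite-$n$ operator $\mathcal C_{\beta,n}$ uses the piecewise constant hyperbolic path $b_n$ together with the terminal datum $b_{*,n}$, while the limit $\mathcal C_\beta$ of \eqref{e:carop} uses the hyperbolic Brownian path $\mathcal B$ of \eqref{e:hbm} and its boundary limit $\mathcal B(\infty)$. So the plan is to prove a deterministic continuity statement --- \emph{if hyperbolic paths $b_n\to b$ locally uniformly on $[0,1)$, the terminal data converge, and a uniform tightness estimate holds near the singular endpoint $t=1$, then the associated operators converge in the norm-resolvent sense} --- and then feed in the a.s.\ path convergence $b_n\to\mathcal B$ established above via the coupling, together with the tightness input.

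First I would fix a spectral parameter $z\notin\mathbb R$ (admissible since these operators are self-adjoint with real spectrum) and write $(\,\cdot\,-z)^{-1}$ as an integral operator. Exactly as for a regular Sturm--Liouville / Dirac problem with separated boundary conditions, the Green kernel $G_b(s,t)$ is built from two solutions of the first-order ODE $\partial_t g=-z\,J^{X_t^{-1}}g$ --- the eigenvalue equation of $JX_t^2\partial_t$ at $z$ --- one normalised to satisfy the left boundary condition $f_1(0)=f_2(0)$, the other the right boundary condition determined by $b_*$, divided by their (constant) Wronskian. Thus norm-resolvent convergence reduces to showing $\|G_{b_n}-G_{\mathcal B}\|_{\mathrm{HS}}\to 0$.

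Next I would control these kernels on a bulk interval $[0,1-\delta]$. There $b_n\to\mathcal B$ uniformly, so the coefficient $J^{X_t^{-1}}$ (a continuous function of $b$ away from $|b|=1$) converges uniformly; Gr\"onwall then gives uniform convergence of the fundamental solutions, hence of the left solution outright and of the right solution once one checks that the terminal datum $b_{*,n}\to\mathcal B(\infty)$ --- which follows from $b_n(t)\to\mathcal B(t)$, the fact that $\mathcal B(t)$ converges to a point on the unit circle, and the uniform law of $\alpha_{n-1}$. Consequently the restriction of $G_{b_n}-G_{\mathcal B}$ to $[0,1-\delta]^2$ tends to $0$ in Hilbert--Schmidt norm for each fixed $\delta>0$.

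The hard part will be the third step: showing that the contribution of the corner $\{s\vee t>1-\delta\}$ to $\|G_{b_n}\|_{\mathrm{HS}}$ is small as $\delta\to0$, uniformly in $n$. This is exactly the tightness near the singular endpoint, where $|b_n(t)|\to1$ and $X_t^2$ blows up, and it does not come for free from the path convergence; it is the ``additional tightness argument'' alluded to above. The natural route is to bound, uniformly in $n$, the total rotation the solutions accumulate on $[1-\delta,1)$ --- in the geometric language of \eqref{odegamma} and the Brownian carousel of \cite{BVBV}, the increment of the angle $\gamma$ over $[1-\delta,1)$, which also controls the number of eigenvalues of $\mathcal C_{\beta,n}$ falling in any fixed compact window. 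Such a bound is available for the hyperbolic random walk $b_n$ by comparison with $\mathcal B$ through the coupling (the walk is a time-change of $\mathcal B$ and the hyperbolic step sizes $d_k$ are tight), and with it the corner contribution is uniformly negligible. Combining the three steps gives $\|G_{b_n}-G_{\mathcal B}\|_{\mathrm{HS}}\to0$, i.e.\ $\mathcal C_{\beta,n}\to\mathcal C_\beta$ in the norm-resolvent sense; since norm-resolvent convergence of self-adjoint operators implies convergence of eigenvalues with multiplicity and of spectral projections (hence eigenvectors in norm), and the eigenvalues of $2\mathcal C_{\beta,n}$ are the lifted, rescaled circular $\beta$ eigenangles by the theorem of the previous section, the circular $\beta$ eigenvalue process converges to the eigenvalues of $2\mathcal C_\beta$, namely the $\Sineb$ process.
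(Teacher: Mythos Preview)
The paper does not actually prove this theorem here: after constructing the coupling it simply says that ``it is now straightforward to show that the path $b_n(t)\to\mathcal B(t)$ a.s.\ uniformly on compacts'' and that ``with an additional tightness argument, we get'' the result, deferring the details to \cite{VV3}. Your proposal is a reasonable fleshing-out of precisely this two-line outline: you reduce norm-resolvent convergence to Hilbert--Schmidt convergence of the Green kernel, use the path convergence on $[0,1-\delta]$ via Gr\"onwall, and isolate the corner contribution near $t=1$ as the place where the unspecified ``additional tightness argument'' must enter. In that sense you and the paper are in agreement about the architecture of the proof; you have simply added the resolvent machinery that the survey leaves implicit.

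Two minor points are worth flagging. First, your description of the random walk $b_n$ as ``a time-change of $\mathcal B$'' is not quite accurate under the coupling as stated: the $b_k$ are the values of $\mathcal B$ at a sequence of stopping times (first exits of hyperbolic balls), so $b_n(t)=b_{\lfloor tn\rfloor}$ is a piecewise-constant sampling of $\mathcal B$, not a reparametrisation; the comparison argument for tightness must therefore control the gaps between these stopping times in addition to the behaviour of $\mathcal B$ itself. Second, your treatment of the terminal datum $b_{*,n}\to\mathcal B(\infty)$ is a bit quick: $b_{*,n}$ involves the independent uniform variable $\alpha_{n-1}$, and the reason its effect washes out is that the accumulated hyperbolic isometry $A_0^{-1}\cdots A_{n-2}^{-1}$ pushes any boundary point to a small arc around $\mathcal B(\infty)$; this contraction should be made explicit. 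Neither point is a genuine gap in the strategy, but both would need to be addressed in a full proof.
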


For bulk results in the Laguerre case, see \cite{jacquot2011bulk}.

\section{The Brownian carousel}

The Brownian carousel description gives a simple way to
analyze the limiting point process. The hyperbolic angle of
the rotating boundary point as measured from $b(t)$ follows
the \textbf{Brownian carousel SDE}. Indeed, define
$\alpha_\lambda(t)$ to be the continuous function with $\alpha_\lambda(0)=0$ so that with $X$ as in \eqref{e:Xdef} (recall $\mathcal P (x,y)^\dagger = x/y$)
$$
e^{i\alpha_\lambda(t)}=\mathcal P X^{-1} g_\lambda(t)
$$
for the solution $g_\lambda$ of the ODE $2C_\beta g_\lambda = \lambda g_\lambda$ started at $(1,1)^\dagger$. (A factor 2 here for backward compatibility). While $\mathcal P g$ evolves monotonously on the circle, the evolution of $\alpha$ satisfies a coupled one-parameter
family of stochastic differential equations. We apply a logarithmic time change for simplicity to get, with $f(t)= \frac{\beta}{4}\exp(-\beta t/4)$ the SDE
\begin{equation}\label{e_ssefenetudja}
d\alpha_\lambda=  \lambda  f\,dt + \Re ((e^{-i\alpha_\lambda}-1)dZ), \qquad
\alpha_\lambda(0)=0,
\end{equation}
driven by a two-dimensional standard Brownian motion. For a
single $\lambda$, this reduces to the one-dimensional
stochastic differential equation
\begin{equation}\label{e sse3a}
d\alpha_\lambda=   \lambda f\,dt + 2 \sin(\alpha_\lambda/2)dW,\qquad
\alpha_\lambda(0)=0,
\end{equation}
which converges as $t\to\infty$ to an integer multiple
$\alpha_\lambda(\infty)$ of $2\pi$. A direct consequence of
oscillation theory for $\mathcal C_\beta$ is the following.
\begin{proposition}\label{sseprop}
The number of points $N(\lambda)$ of the point process $\Sineb$ in
$[0,\lambda]$ has the same distribution as
$\alpha_\lambda(\infty)/(2\pi)$.
\end{proposition}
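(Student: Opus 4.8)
The plan is to connect the eigenvalue counting function $N(\lambda)$ directly to the geometric picture laid out for the Brownian carousel, using Sturm–Liouville oscillation theory for the Dirac-type operator $\mathcal C_\beta$. First I would recall that $2\mathcal C_\beta = 2JX_t^2\partial_t$ on $[0,1)$ with the boundary conditions $f(0)\parallel(1,1)^\dagger$ and $f(1)\parallel(\mathcal B(\infty),1)^\dagger$. Writing the eigenvalue equation $2\mathcal C_\beta g_\lambda = \lambda g_\lambda$ in the conjugated form $\partial_t g_\lambda = -\tfrac{\lambda}{2} J^{X_t^{-1}} g_\lambda$ with $g_\lambda(0)=(1,1)^\dagger$, the quantity $\mathcal P X_t^{-1} g_\lambda(t) = e^{i\alpha_\lambda(t)}$ tracks a point moving on the unit circle; as computed in the excerpt (see \eqref{odegamma} and its time-changed version \eqref{e sse3a}), $\alpha_\lambda$ evolves monotonically in $t$ for fixed $\lambda$ and is also monotone nondecreasing in $\lambda$. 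The key structural fact, stated earlier, is that the number of eigenvalues of $\mathcal C_\beta$ in $[0,\lambda]$ equals the number of times $e^{i\gamma_t}$ (equivalently $e^{i\alpha_\lambda(t)}$, after the appropriate identification) returns to the terminal direction $\mathcal B(\infty)$ on $[0,1)$, i.e. the number of full $2\pi$-windings of $\alpha_\lambda$ relative to the boundary phase.

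The main steps are then: (1) invoke oscillation theory for $\mathcal C_\beta$ to get $N(\lambda) = \#\{$windings of $\alpha_\lambda$ that complete before reaching the right endpoint with the correct boundary phase$\}$; (2) observe that in the logarithmic time variable the right endpoint $t=1$ corresponds to $t=\infty$, and that the boundary condition $f(1)\parallel(\mathcal B(\infty),1)^\dagger$ is exactly the statement that the relevant phase is measured against $\mathcal B(\infty)$, which after passing to the rotating frame is precisely the condition that picks out the limit $\alpha_\lambda(\infty)$; (3) show that $\alpha_\lambda(\infty)$ is a.s. an integer multiple of $2\pi$ — this is the convergence claim already recorded below \eqref{e sse3a}, and it is what makes $\alpha_\lambda(\infty)/(2\pi)$ an integer-valued random variable; (4) conclude that the total winding count $N(\lambda)$ equals $\alpha_\lambda(\infty)/(2\pi)$, as distributions (in fact almost surely under the natural coupling). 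The monotonicity in $\lambda$ of the solution flow on the circle is what guarantees that increasing $\lambda$ only adds windings, so the counting is consistent and the identification is clean.

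I expect the main obstacle to be Step (1)–(2): carefully justifying the oscillation-theoretic count for the \emph{singular} endpoint $t=1$ (equivalently $t=\infty$ in logarithmic time), where $X_t$ degenerates as $\mathcal B(t)$ approaches the unit circle. One has to verify that $\mathcal C_\beta$ is in the limit-point case at that endpoint (so the boundary condition there is automatic, or at least that the spectrum is discrete as asserted earlier) and that no winding "escapes to infinity" — i.e. that the total winding of $\alpha_\lambda$ up to time $1$ is finite and equals $\alpha_\lambda(\infty)/(2\pi)$ without a boundary correction. The self-adjointness and discreteness are assumed from \cite{VV3}, so the real content is matching the boundary phase $\mathcal B(\infty)$ with the terminal value of the carousel phase; this uses the fact that $e^{i\alpha_\lambda(t)}$ is measured in the frame $X_t^{-1}$ and that $\mathcal B(t)\to\mathcal B(\infty)$, so the condition $f_1(1)=f_2(1)\mathcal B(\infty)$ becomes $\alpha_\lambda(1^-)\in 2\pi\mathbb Z$ after the identification. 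Once that is pinned down, Steps (3)–(4) are the already-stated convergence of \eqref{e sse3a} plus bookkeeping, and the proposition follows.
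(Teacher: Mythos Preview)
Your proposal is correct and follows exactly the route the paper indicates: the paper does not give a standalone proof of Proposition~\ref{sseprop} but simply records it as ``a direct consequence of oscillation theory for $\mathcal C_\beta$,'' and your sketch is a careful unpacking of precisely that oscillation-theoretic argument (monotonicity in $\lambda$, winding count against the terminal boundary phase $\mathcal B(\infty)$, and the a.s.\ convergence of $\alpha_\lambda$ to a multiple of $2\pi$). Your identification of the singular endpoint $t=1$ as the place requiring care is apt, and the paper defers those analytic details to \cite{VV3} just as you do.
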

%

\section{Gap probabilities}

In the 1950s Wigner examined the asymptotic probability of having no eigenvalue in a fixed interval of size $\lambda$ for $n\to \infty$ while the spectrum is  rescaled to have an average eigenvalue spacing $2\pi$.
Wigner's prediction for this probability was
$$
p_\lambda= \exp\left(- (c +o(1))\lambda^2\right).
$$
where this is a $\lambda\to \infty$ behavior.
%
This
rate of decay is in sharp contrast with the exponential
tail for gaps between Poisson points; it is one
manifestation of the more organized nature of the random
eigenvalues. Wigner's estimate of the constant $c$,
$1/(16\pi)$, later turned out to be inaccurate. \cite{Dy62}
improved this estimate to
\begin{equation}\label{mainform}
 p_\lambda
=(\kappa_\beta+o(1))\lambda^{\gamma_\beta}\,\exp\left(-
\frac{\beta}{64}\lambda^2+\left(\frac{\beta}{8}-\frac14\right)\lambda\right)
\end{equation}
which applies to the $\Sineb$ process.

Dyson's computation of the exponent $\gamma_\beta$, namely
$\frac{1}{4}(\frac{\beta}{2}+\frac{2}{\beta}+ 6)$, was
shown to be
slightly incorrect. Indeed,  \cite{dCM73} gave more substantiated predictions that $\gamma_\beta$ is equal to $-1/8, -1/4$ and $-1/8$ for values $\beta=1, 2$ and 4, respectively. 
 Mathematically precise proofs for the $\beta=1,2$ and 4 cases were later given by several authors: \cite{Wi96}, \cite{DIZ96}. Moreover, the value of $\kappa_\beta$ and higher order asymptotics were also established for these specific cases by \cite{Kr04}, \cite{Ehr06}, \cite{DIKZ07}.

In \cite{BVBV2} we give a mathematically rigorous version of Dyson's  prediction for general $\beta$ with a corrected exponent $\gamma_\beta$ using the Brownian carousel SDE.
\begin{theorem}\label{mainthm}
The formula \eqref{mainform} holds
with a positive $\kappa_\beta$ and
$$
\gamma_\beta=\frac{1}{4}\left(\frac{\beta}{2}+\frac{2}{\beta}-
3\right).
$$
\end{theorem}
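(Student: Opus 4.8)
The plan is to reduce the gap probability to a statement about a single diffusion and then analyze that diffusion by a Cameron--Martin--Girsanov change of measure. By Proposition~\ref{sseprop}, $p_\lambda=P(N(\lambda)=0)$ equals $P(\alpha_\lambda(\infty)=0)$, where $\alpha_\lambda$ solves the one-dimensional carousel SDE~\eqref{e sse3a},
$$
d\alpha_\lambda=\lambda f\,dt+2\sin(\alpha_\lambda/2)\,dW,\qquad f(t)=\tfrac{\beta}{4}e^{-\beta t/4},\quad\alpha_\lambda(0)=0.
$$
Since $\alpha_\lambda$ passes the even multiples of $\pi$ only upward (there the drift $\lambda f$ is positive while the noise coefficient vanishes) and $\int_0^\infty f=1$, this is essentially the large-$\lambda$ probability that $\alpha_\lambda$ stays in $[0,2\pi)$ for all $t$: the probability that a diffusion with total upward drift $\lambda$ never completes a single loop.

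The guiding principle is that the cheapest way to avoid a loop is to hold $\alpha_\lambda$ near $\pi$, where the noise coefficient $2\sin(\alpha_\lambda/2)=2$ is largest, so that a Girsanov drift of size $\approx\lambda f/2$ on $W$ cancels the deterministic drift there, at cost $\tfrac12\int_0^\infty(\lambda f/2)^2\,dt=\tfrac{\lambda^2}{8}\int_0^\infty f^2\,dt=\tfrac{\lambda^2}{8}\cdot\tfrac{\beta}{8}=\tfrac{\beta}{64}\lambda^2$, which already produces the leading exponent. To make this rigorous I would pass to a measure $\widetilde P$ under which $\widetilde W=W+\int h$ is Brownian, with $h=h(t,\alpha_\lambda)$ chosen so that on a window $t\le T_\lambda\asymp\tfrac{4}{\beta}\log\lambda$ (the range where $\lambda f$ is not negligible) the cancellation leaves a drift that mean-reverts $\alpha_\lambda$ to a neighbourhood of $\pi$, while for $t>T_\lambda$ the tilt is switched off and $\alpha_\lambda$ runs as the untilted local martingale $d\alpha=2\sin(\alpha/2)\,dW$, for which $0$ and $2\pi$ are unattainable. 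Then under $\widetilde P$ the no-loop event has probability bounded away from $0$, and
$$
p_\lambda=\widetilde E\Bigl[\exp\Bigl(\int h\,d\widetilde W-\tfrac12\int h^2\,dt\Bigr);\ \alpha_\lambda\text{ never loops}\Bigr],
$$
which I would estimate from above and below; the upper bound additionally uses that on the no-loop event the path is forced to stay close to the optimal profile, so that the Radon--Nikodym factor is uniformly small there.

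The quantitative heart is the expansion $-\log p_\lambda=\tfrac{\beta}{64}\lambda^2-\bigl(\tfrac{\beta}{8}-\tfrac14\bigr)\lambda-\gamma_\beta\log\lambda-\log\kappa_\beta+o(1)$. The linear term collects the cross term in $\tfrac12\int h^2$ between $\lambda f$ and the $O(1)$ mean-reversion correction, the fact that $\sin^2(\alpha_\lambda/2)$ is not exactly $1$, the It\^o correction from centring at $\alpha_\lambda-\pi$, and the fixed ``entrance cost'' of climbing from $0$ to a neighbourhood of $\pi$. The logarithmic term $\gamma_\beta\log\lambda=\bigl(\tfrac{\beta}{8}+\tfrac{1}{2\beta}-\tfrac34\bigr)\log\lambda$ is the delicate one: it is the phase-space volume of near-optimal paths, obtained by linearizing the tilted SDE about the optimal profile, which yields an Ornstein--Uhlenbeck-type Gaussian correction whose Laplace-method (Hessian-determinant) contribution picks up a power of $\lambda$ through the $\lambda$-dependent rescaling of the two endpoint regimes: the entrance near $\alpha=0$, governed there by the linearization $d\alpha\approx\lambda f\,dt+\alpha\,dW$, and the crossover near $t\approx T_\lambda$ to the free diffusion. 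Finiteness and strict positivity of $\kappa_\beta$ then reduce to showing this fluctuation determinant, together with the entrance and exit factors, is finite and nonzero; the theorem does not require its value.

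The main obstacle is precisely the coefficient $\gamma_\beta$ of $\log\lambda$: it needs a careful second-order analysis of the fluctuations of $\alpha_\lambda$ around the optimal path, uniform over all of $[0,\infty)$, together with the correct bookkeeping of the two boundary regimes in which the crude ``pin at $\pi$'' picture breaks down, namely the entrance from $\alpha=0$ and the far-time regime with the drift switched off. A useful consistency check is agreement with the classical values $\gamma_\beta=-\tfrac18,-\tfrac14,-\tfrac18$ at $\beta=1,2,4$ predicted in \cite{dCM73} and established in \cite{Wi96,DIZ96,Kr04,Ehr06,DIKZ07}.
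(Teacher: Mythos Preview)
The paper does not actually prove Theorem~\ref{mainthm}; it attributes the full result to \cite{BVBV2} and instead proves only the leading-order upper bound, stated separately as Theorem~\ref{t_ggap}. That weaker statement is established by a much simpler and more robust argument than yours: discretize time into steps of size $\eps$, use that $\alpha_\lambda$ can only cross multiples of $2\pi$ upward to reduce the no-loop event to a chain of events $\{\alpha(k\eps)<2(k{+}1)\pi\}$, and bound each conditional probability by the Gaussian tail estimate of Lemma~\ref{gtail}. No change of measure, no optimal profile, no fluctuation analysis; the Riemann sum collapses to $\tfrac{\lambda^2}{8}\|f\|_2^2=\tfrac{\beta}{64}\lambda^2$. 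This method buys robustness (it works for any $f$ with the stated decay) at the cost of all subleading information.

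Your Girsanov/Laplace outline is indeed the strategy of \cite{BVBV2}, and your heuristic identification of the optimal behaviour---pin $\alpha$ near $\pi$ where the noise coefficient is maximal, at Girsanov cost $\tfrac12\int(\lambda f/2)^2=\tfrac{\beta}{64}\lambda^2$---is correct. But what you have written is a programme, not a proof, and you yourself name the genuine gap: the coefficient $\gamma_\beta$. Saying it arises as a ``phase-space volume of near-optimal paths'' via ``linearizing the tilted SDE'' and a ``Hessian-determinant contribution'' is a description, not a computation. In practice one needs a precise (not approximate) choice of the tilt $h$, control of the stochastic integral $\int h\,d\widetilde W$ in the Radon--Nikodym density (it is not automatically $O(1)$ on the no-loop event and contributes to the subleading terms), matching upper and lower bounds on the tilted event probability, and a sharp treatment of the two boundary layers you mention---the entrance from $\alpha=0$ and the crossover near $t\approx T_\lambda$. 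Your accounting for the linear coefficient $\tfrac{\beta}{8}-\tfrac14$ is likewise too vague to pin it down. So the approach is right in spirit and aligns with the cited source, but essentially all of the content of the theorem beyond the leading exponent remains to be carried out.
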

We include a proof of a theorem from \cite{BVBV} that works for more general driving functions $f$ (the equation \eqref{e_ssefenetudja}) but gives a weaker result in this case, namely the main order term in the upper bound.
\begin{theorem}\label{t_ggap}
Let $f:{\mathbb R}^+\to {\mathbb R}^+$  satisfy $f(t)\le c/(1+t^2)$ for all $t$ and
$\int_0^\infty |df|<\infty$. Let $k\ge 0$. As $\lambda \to
\infty$, for the point process given by the Brownian carousel with
parameter $f$ we have
\begin{eqnarray} \label{e_ggap}
\pr(\mbox{\# of points in }[0,\lambda]\le k) =
\exp\big(-\lambda^2( \|f\|_2^2/8+o(1))\big).
\end{eqnarray}
\end{theorem}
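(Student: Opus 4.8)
The plan is to analyze the single-parameter SDE \eqref{e_ssefenetudja} for fixed large $\lambda$ and estimate the probability that $\alpha_\lambda(\infty) < 2\pi(k+1)$, which by Proposition \ref{sseprop} (or rather its analogue for the general carousel with driving function $f$) equals $\Prob(\text{\# of points in }[0,\lambda]\le k)$. Write $\alpha=\alpha_\lambda$. The drift term $\lambda f\,dt$ is pushing $\alpha$ upward by a total amount $\lambda\int_0^\infty f\,dt$, which is $\Theta(\lambda)$, so on its own the drift forces many windings; to have few points we must fight the drift using the noise term $\Re((e^{-i\alpha}-1)\,dZ)$. The event $\{\alpha(\infty)\le 2\pi(k+1)\}$ is essentially the event that $\alpha$ stays bounded, i.e. never completes more than $k+1$ revolutions. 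I would first reduce to the event $B=\{\alpha(t)\in[\delta,2\pi(k+1)-\delta]\text{ for all }t\ge T_0\}$ for suitable small $\delta$ and fixed $T_0$, since near $\alpha\in 2\pi\mathbb{Z}$ the noise coefficient $e^{-i\alpha}-1$ degenerates and the points $2\pi\mathbb{Z}$ are absorbing from below in the relevant sense; the reduction to staying in a region bounded away from these multiples costs only a constant factor and a negligible correction to the exponent.

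The heart of the argument is a Cameron–Martin–Girsanov change of measure that removes the drift. On the event that $\alpha$ stays in a compact set bounded away from $2\pi\mathbb Z$, the coefficient $e^{-i\alpha}-1$ is bounded below in modulus, so I can find an adapted drift $h(t)$ for the driving Brownian motion $Z$ of size $O(\lambda f(t))$ whose effect exactly cancels the $\lambda f\,dt$ term, turning $\alpha$ into a (time-changed) martingale-like process with no net winding tendency. The Girsanov density is $\exp(-\int_0^\infty h\,dZ - \tfrac12\int_0^\infty |h|^2\,dt)$; the quadratic-variation term contributes $-\tfrac12\int_0^\infty |h|^2\,dt = -\lambda^2(\|f\|_2^2/8 + o(1))$ once one computes that the optimal $h$ has $|h(t)|^2 = \lambda^2 f(t)^2/2 \cdot (1+o(1))$ after accounting for the factor coming from the real part and the normalization of $Z$ (this is exactly where the constant $\|f\|_2^2/8$ is produced, paralleling the $\beta a^3/24$ computation in the proof of the lower bound for $\Prob(\Lambda_0>a)$). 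The hypothesis $f(t)\le c/(1+t^2)$ guarantees $\|f\|_2<\infty$ so the density is genuinely integrable, and $\int_0^\infty|df|<\infty$ gives enough regularity to control the stochastic integral $\int h\,dZ$, which is $O(\lambda)$ and hence lower-order. Under the tilted measure, the probability that $\alpha$ stays in the desired compact band for all time is bounded below by a positive constant independent of $\lambda$ (a drift-free diffusion with bounded, nondegenerate coefficients confined to an interval for infinite logarithmic time — finite because of the $f$ decay controlling the remaining time scale), giving the lower bound in \eqref{e_ggap}.

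For the matching upper bound I would argue directly: partition $[0,\infty)$ into finitely many (in $\lambda$, a slowly growing number of) time blocks, and on each block use that to prevent $\alpha$ from advancing past the next multiple of $2\pi$ against the drift $\lambda f\,dt$, the martingale part must make an excursion of the corresponding size, whose probability is Gaussian with the stated exponent; a union bound / submultiplicativity over blocks combined with the Markov property assembles these into $\exp(-\lambda^2(\|f\|_2^2/8 - o(1)))$. The main obstacle I anticipate is the degeneracy of the noise coefficient near $\alpha\in 2\pi\mathbb Z$: one must show that the process does not spend the bulk of its time, or gain the bulk of its winding, in these neighborhoods where the Girsanov drift would blow up, and simultaneously that excluding small neighborhoods of $2\pi\mathbb Z$ does not change the exponential rate. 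Handling this cleanly — likely via a comparison with the one-dimensional equation \eqref{e sse3a} and the fact that $2\sin(\alpha/2)$ vanishes only at $2\pi\mathbb Z$, together with a localization of the variational problem away from those points — is the technical core; the rest is the now-standard Girsanov computation and a confinement estimate for a nondegenerate diffusion.
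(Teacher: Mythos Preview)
Your upper-bound strategy---partition time into short blocks, use the Markov property to multiply Gaussian tail bounds on each block, and recognize the resulting exponent as a Riemann sum for $\tfrac18\int f^2$---is exactly what the paper does. The paper implements it via the elementary lemma that if $dX=Y\,dB$ with $|Y|\le m$ then $P(X_t-X_0\ge a)\le\exp\bigl(-a^2/(2tm^2)\bigr)$; applied with $Y=-2\sin(\alpha/2)$, $m=2$ on each block $[j\eps,(j+1)\eps]\subset[0,K]$ with $1/\lambda\ll\eps\ll1$, this gives a product whose exponent is the Riemann sum, and one then lets $K\to\infty$. Two small clarifications relative to your sketch: the paper uses the product of conditional probabilities (your ``submultiplicativity''), not a union bound; and the one-sided trapping of $\alpha$ at multiples of $2\pi$ is used up front to reduce the event to $\{\alpha(j\eps)<2(k+1)\pi\text{ for all }j\}$, which is simpler than tracking individual crossings.

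The paper's proof, however, establishes \emph{only} the upper bound---the sentence introducing the theorem says so explicitly, and the proof ends with ``Letting $K\to\infty$ provides the desired upper bound.'' So your Girsanov lower bound is not being compared against anything written here. Your approach is the natural one and, as you note, parallels the paper's own Cameron--Martin--Girsanov argument for $P(\Lambda_0>a)$: tilt so that $\alpha$ sits near $\pi$, where $\sin(\alpha/2)=1$ and the Girsanov cost density is exactly $\lambda^2 f(t)^2/8$, and you correctly flag the degeneracy of the diffusion coefficient at $2\pi\mathbb Z$ as the real obstacle. The one place your sketch is thin is the confinement step under the tilted law: the residual noise is of order one, not small, so showing the tilted process stays in a fixed compact arc for all time with probability bounded below uniformly in $\lambda$ is not a throwaway line and needs a genuine argument (e.g.\ a further comparison or a Lyapunov-type estimate).
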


\begin{lemma}\label{gtail} Let $Y$ be an adapted stochastic process with $|Y_t|<m$, and let
$X$ satisfy the SDE $dX=YdB$ where $B_t$ is a Brownian motion.
Then for each $a,t>0$ we have
$$
\pr(X(t)-X(0)\ge a) \le \exp\left(- a^2/(2 tm^2) \right).
$$
\end{lemma}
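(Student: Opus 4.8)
The plan is to use the exponential martingale associated with $X$ together with a uniform bound on the quadratic variation, i.e.\ a standard Bernstein/Azuma-type argument for continuous martingales. First I would fix a parameter $\theta>0$ and consider the process $M_t = \exp(\theta X_t - \tfrac{\theta^2}{2}\int_0^t Y_s^2\,ds)$. Since $dX = Y\,dB$, It\^o's formula shows that $M_t$ is a nonnegative local martingale; because $|Y_t|<m$ the drift compensator $\int_0^t Y_s^2\,ds \le m^2 t$ is bounded on $[0,t]$, so $M$ is in fact a genuine martingale (one can check the relevant integrability, e.g.\ via Novikov's condition, or localize and use Fatou), whence $E[M_t]\le M_0 = 1$.

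The key step is then the deterministic domination $\int_0^t Y_s^2\,ds \le m^2 t$, which lets me replace the random compensator by the constant $m^2 t$ at the cost of an inequality in the right direction:
$$
\exp\!\Big(\theta X_t - \tfrac{\theta^2}{2} m^2 t\Big) \le M_t,
$$
hence $E\big[\exp(\theta(X_t-X_0))\big] \le \exp(\tfrac{\theta^2}{2} m^2 t)$ after multiplying through by $e^{-\theta X_0}$ (here $X_0$ is deterministic, or one conditions on $\mathcal F_0$). Now apply Markov's inequality: for any $\theta>0$,
$$
\pr\big(X_t - X_0 \ge a\big) \le e^{-\theta a}\, E\big[e^{\theta(X_t-X_0)}\big] \le \exp\!\Big(-\theta a + \tfrac{\theta^2}{2} m^2 t\Big).
$$
Optimizing over $\theta$ — the minimum of the quadratic exponent is attained at $\theta = a/(m^2 t)$ — yields exactly $\exp(-a^2/(2 t m^2))$, which is the claimed bound.

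The only real subtlety is the true-martingale property of $M_t$: one must rule out the possibility that $E[M_t]<1$ due to $M$ being merely a strict local martingale. This is where the hypothesis $|Y_t|<m$ does the work, since it makes $\int_0^t Y_s^2\,ds$ bounded and Novikov's criterion applies immediately (indeed $E[\exp(\tfrac12\int_0^t Y_s^2\,ds)] \le e^{m^2 t/2}<\infty$); alternatively, stop at $\tau_N = \inf\{s: |X_s|\ge N\}$, use $E[M_{t\wedge\tau_N}]=1$, and pass to the limit with Fatou's lemma to get $E[M_t]\le 1$, which is all that is needed above. Everything else is the routine exponential-Chebyshev optimization, so I would not belabor it.
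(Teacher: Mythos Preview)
Your proof is correct and complete. It is, however, a genuinely different argument from the paper's. The paper proceeds by the Dambis--Dubins--Schwarz time change: writing $X_t-X_0=\widetilde B_\tau$ for a Brownian motion $\widetilde B$ and the random clock $\tau=\int_0^t Y_s^2\,ds\le m^2 t$, and then invoking the Gaussian tail bound $\pr(\widetilde B_r\ge a)\le e^{-a^2/(2r)}$. Your route instead builds the Dol\'eans--Dade exponential martingale, bounds its compensator deterministically by $m^2 t$, and finishes with the Chernoff optimization. Both arguments pivot on exactly the same structural fact---the quadratic variation is at most $m^2 t$---but package it differently: the paper's version is shorter once one is willing to quote DDS, while yours is more self-contained (only It\^o's formula and Novikov/Fatou are needed) and delivers the constant in the exponent cleanly without passing through a running-maximum bound.
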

\begin{proof} We may assume $X(0)=0$.
Then $X_t=B_\tau$ where $\tau$ is the random time change
$\tau=\int_0^t Y^2(s)ds$. Since $\tau<m^2 t$ the inequality
now follows from
\begin{equation*}
\pr(B_r>a)\le \exp\left(- a^2/(2 r) \right).\qedhere
\end{equation*}
%
%
%
%
\end{proof}

\begin{proof}[Proof of Theorem \ref{t_ggap}] The event in
\eqref{e_ggap} is given in terms of the Brownian carousel SDE
as $\lim_{t\to\infty} \alpha_{\lambda}(t)\le 2k\pi$.

Since $\alpha(t)$ never returns below a multiple of $2\pi$ that it has passed, it is enough to give an upper bound on the
probability that $\alpha$ stays less than $x=2(k+1)\pi$.
 For $0<s<t$ we have
\begin{eqnarray*}
\pr(\alpha(t)<x\,|\,\mathcal F_{s})  &=& \pr\left(-\int_s^t
2\sin(\alpha/2) dB \,> \,\lambda \int_s^t f dt - x
+\alpha(s)\,\Big|\,\mathcal F_{s}\right).
\end{eqnarray*}
We may drop the $\alpha(s)$ from the right hand side and use Lemma
\ref{gtail} with $Y=-2\sin(\alpha/2)$, $m=2$, $a=\lambda (\int_s^t f dt- x /\lambda)$ to get the upper bound
$$
\pr(\alpha(t)<x\,|\,\mathcal F_{s})\le \exp(-\lambda^2r(s,t)),\qquad
 r(s,t)=\frac{(\int_s^t f dt- x /\lambda)^2}{8(t-s)}.
$$
Then, by just requiring $\alpha(t)<x$ for times $\eps,
2\eps,\ldots \in[0,K]$ the probability that $\alpha$ stays less
than $x=2(k+1)\pi$ is bounded above by
$$
 \ev
\prod_{k=0}^{K/\eps} \pr(\alpha((k+1)\eps)<x\,\big|\,\mathcal
F_{k\eps})
 \le \exp\Big\{-\lambda^2 \sum_{k=0}^{K/\eps} r(\eps k, \eps
k+\eps) \Big\}.
$$
A choice of $\eps$ so that  $ x /\lambda=o(\eps)$ as
$\lambda\to\infty$ yields
 the asymptotic Riemann sum 
$$
\sum_{k=0}^{K/\eps} r(\eps k, \eps k+\eps) =\frac{1}{8}
\int_0^K f^2(t)dt + o(1).
$$
Letting $K\to \infty$ provides the desired upper bound.
\end{proof}
Next, we show a central limit theorem for the number of eigenvalues of $\mathcal C_\beta$ from \cite{KVV}.
\begin{theorem}[CLT for $\Sineb$]
\label{cltsineb}
As $\lambda\to \infty$ we have
\[
\frac1{\sqrt{\log \lambda}} \of{\textup{Sine}_\beta[0,\lambda]-\frac{\lambda}{2\pi}} \Rightarrow \cN(0,\frac{2}{\beta \pi^2}).
\]
\end{theorem}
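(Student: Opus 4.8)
The plan is to translate the point count into the Brownian carousel SDE, identify the centered count as the terminal value of a stochastic integral, and then apply a martingale central limit theorem once its quadratic variation is known to grow like $\tfrac{8}{\beta}\log\lambda$. Concretely, by Proposition \ref{sseprop} we may work on the probability space of the one-parameter carousel SDE, where $\Sineb[0,\lambda]\eqd\alpha_\lambda(\infty)/(2\pi)$ for the solution of $d\alpha_\lambda=\lambda f\,dt+2\sin(\alpha_\lambda/2)\,dW$, $\alpha_\lambda(0)=0$, with $f(t)=\tfrac{\beta}{4}e^{-\beta t/4}$. Since $\int_0^\infty f=1$ and $\alpha_\lambda(t)$ converges a.s. to a member of $2\pi\mathbb{Z}_{\ge0}$, the martingale $\mathcal M^\lambda_t:=\int_0^t 2\sin(\alpha_\lambda(s)/2)\,dW_s=\alpha_\lambda(t)-\lambda\int_0^t f$ converges a.s., and
$$
\Sineb[0,\lambda]-\frac{\lambda}{2\pi}\;\eqd\;\frac{1}{2\pi}\,\mathcal M^\lambda_\infty,\qquad \langle\mathcal M^\lambda\rangle_\infty=4\int_0^\infty\sin^2\!\big(\alpha_\lambda(s)/2\big)\,ds.
$$
Via the Dambis--Dubins--Schwarz time change $\mathcal M^\lambda_\infty=\gamma_\lambda\!\left(\langle\mathcal M^\lambda\rangle_\infty\right)$ with $\gamma_\lambda$ a standard Brownian motion, together with Brownian scaling and the modulus of continuity, everything reduces to the in-probability convergence
$$
\frac{1}{\log\lambda}\int_0^\infty\sin^2\!\big(\alpha_\lambda(s)/2\big)\,ds\ \longrightarrow\ \frac{2}{\beta}.
$$

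To prove this ergodic-type averaging I would split the integral at the crossover time $L=L(\lambda)$ defined by $\lambda f(L)=1$, so $L=\tfrac{4}{\beta}\log\lambda+O(1)$. On $[0,L]$ the drift $\lambda f(s)\ge1$ is large, $\alpha_\lambda$ makes of order $\lambda/(2\pi)$ windings, and $\sin^2(\alpha_\lambda/2)=\tfrac12-\tfrac12\cos\alpha_\lambda$ should average to $\tfrac12$; the content is that $\int_0^L\cos\alpha_\lambda(s)\,ds=O_P(1)$. I would get this from It\^o's formula for $\sin\alpha_\lambda$: solving for $\lambda f(s)\cos\alpha_\lambda\,ds$ and dividing by $\lambda f(s)$ gives
$$
\cos\alpha_\lambda(s)\,ds=\frac{d(\sin\alpha_\lambda(s))}{\lambda f(s)}-\frac{2\cos\alpha_\lambda\sin(\alpha_\lambda/2)}{\lambda f(s)}\,dW_s+\frac{2\sin\alpha_\lambda\sin^2(\alpha_\lambda/2)}{\lambda f(s)}\,ds,
$$
and then integrate over $[0,L]$. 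The first term is treated by parts, using that $f'/f\equiv-\tfrac{\beta}{4}$ is constant and $1/(\lambda f(s))=\tfrac{4}{\beta\lambda}e^{\beta s/4}\le1$ on $[0,L]$ with $\int_0^L ds/(\lambda f(s))=O(1)$; the third, bounded-variation term is $O(1)$ by the same integrability; the stochastic term is a martingale of quadratic variation $\le4\int_0^L ds/(\lambda f(s))^2=O(1)$, hence $O_P(1)$. This yields $\int_0^L\sin^2(\alpha_\lambda/2)\,ds=\tfrac{L}{2}+O_P(1)=\tfrac{2}{\beta}\log\lambda+O_P(1)$.

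For the tail $\int_L^\infty\sin^2(\alpha_\lambda/2)\,ds$ I would shift time by $L$: the equation becomes $d\alpha=e^{-\beta r/4}\,dr+2\sin(\alpha/2)\,dW_r$ on $r\ge0$, started at $\alpha_\lambda(L)$, now with summable drift. This settled diffusion is absorbed in the limit $r\to\infty$ into $2\pi\mathbb{Z}$ at exponential speed (near a limit point $2\pi m$ the gap behaves like $\exp(-r/2+W_r)$), so $\int_0^\infty\sin^2(\alpha(r)/2)\,dr<\infty$ a.s.; moreover both $\sin^2(\alpha/2)$ and the law of the equation depend on $\alpha_\lambda(L)$ only modulo $2\pi$, a compact parameter, so the family $\{\int_L^\infty\sin^2(\alpha_\lambda/2)\,ds\}_\lambda$ is tight, i.e. $O_P(1)$. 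Adding the two pieces, $\int_0^\infty\sin^2(\alpha_\lambda/2)\,ds=\tfrac{2}{\beta}\log\lambda+O_P(1)$, which is the displayed convergence; hence $\mathcal M^\lambda_\infty/\sqrt{\log\lambda}\Rightarrow\mathcal N(0,8/\beta)$, and therefore $\tfrac{1}{\sqrt{\log\lambda}}\big(\Sineb[0,\lambda]-\tfrac{\lambda}{2\pi}\big)\Rightarrow\mathcal N\big(0,\tfrac{1}{4\pi^2}\cdot\tfrac{8}{\beta}\big)=\mathcal N\big(0,\tfrac{2}{\beta\pi^2}\big)$.

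The main obstacle is the oscillatory-integral bound $\int_0^L\cos\alpha_\lambda=O_P(1)$: it is exactly what forces $\sin^2(\alpha_\lambda/2)$ to average to $\tfrac12$ over the many windings, and it relies on the exactly exponential form of $f$ (so that $f'/f$ is constant) to close the integration-by-parts estimate. The tail tightness is more routine but genuinely needed, since the tail carries an $O_P(1)$ correction and one must rule out that it is of order $\log\lambda$. It is also worth noting that the Dambis--Dubins--Schwarz step only needs $\langle\mathcal M^\lambda\rangle_\infty/\log\lambda\to8/\beta$ in probability rather than almost surely or in $L^1$, which is precisely what the above provides.
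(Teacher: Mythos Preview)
Your proof is correct and follows essentially the same route as the paper: reduce via Proposition~\ref{sseprop} and the Dambis--Dubins--Schwarz time change to showing $\tfrac{1}{\log\lambda}\int_0^T\cos\alpha_\lambda\,dt\to 0$ in probability, split at the crossover time $T=L=\tfrac{4}{\beta}\log(\beta\lambda/4)$, and kill the oscillatory integral by an It\^o computation that exploits the exact exponential form of $f$. The only cosmetic difference is that the paper applies It\^o's formula to $e^{i\alpha_\lambda+\beta t/4}$ in one stroke, whereas you apply it to $\sin\alpha_\lambda$ and then integrate by parts against $1/(\lambda f)=\tfrac{4}{\beta\lambda}e^{\beta t/4}$; these are the same calculation written two ways.
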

An $n\to\infty$ version of this theorem for finite matrices
from circular and Jacobi $\beta$ ensembles was shown by Killip \cite{killip2008gaussian}.

\begin{proof}
We will consider the Brownian carousel SDE
\begin{equation}\label{sineb1}
d \alpha^{\lambda}=\lambda \frac{\beta}{4}
e^{-\frac{\beta}{4}t} dt+ 2 \sin(\alpha^{\lambda}/2) d{B},
\quad \alpha^\lambda(0)=0\quad t\in[0,\infty).
\end{equation}
First note that $\tilde \alpha(t)=\alpha^{\lambda}(T+t)$
with $T=\frac{4}{\beta} \log(\beta \lambda/4)$ satisfies
the same SDE with $\lambda=1$. Therefore
$$\frac{\alpha^{\lambda}(\infty)-\alpha^\lambda(T)}{\sqrt{\log(\lambda)}}\to 0$$
in probability. So it suffices to find the the weak limit
of $$\frac{\alpha^{\lambda}(T)-\lambda}{2\pi \sqrt{\log
\lambda}}.$$

We have  $$\alpha(T)-\lambda=-\frac{4}{\beta}+\int_0^T
2\sin(\alpha^{\lambda}/2) d{B}$$
which means
$$\alpha(T)-\lambda+\frac{4}{\beta}\eqd \hat {B}\of{\int_0^T
4\sin(\alpha^{\lambda}/2)^2 dt}$$ for a certain standard
Brownian motion $\hat {B}$. In order to prove the required
limit in distribution
 we only need to show that $\frac{4}{\log \lambda}\int_0^T  \sin(\alpha^\lambda/2)^2 dt\to \frac{8}{\beta}$ in probability. We have
\[
\frac{4}{\log \lambda}\int_0^T  \sin(\alpha^\lambda/2)^2 dt=\frac{8 \log\left[\beta  \lambda /4\right]}{\beta \log \lambda}+\frac{2}{\beta \log \lambda} \int_0^T \cos(\alpha^\lambda) dt.
\]
The first term converges to $8/\beta$.  To bound the second term we compute
\begin{eqnarray*}
\frac{4}{i \beta \lambda \log \lambda} d\of{e^{i \alpha^\lambda+\beta t/4}}&=&\frac{e^{i \alpha^\lambda}}{\log \lambda} dt+\frac{8}{\beta \lambda \log \lambda} e^{i \alpha^\lambda+\beta t/4} \sin(\alpha^\lambda/2) d{B}\\
&+&\frac{8i}{\beta \lambda \log \lambda} e^{i \alpha^\lambda+\beta t/4} \sin(\alpha^\lambda/2) ^2 dt\\&+&\frac{1}{i \lambda \log \lambda} e^{i \alpha^\lambda+\beta t/4}dt.
\end{eqnarray*}
The integral of the left hand side is $
\frac{4}{i \beta \lambda \log \lambda}\left[4 e^{i \alpha^\lambda(T)} \lambda /\beta- 1 \right]=O((\log \lambda)^{-1})
$. The integrals of the last two terms in the right hand side are of the order of $(\lambda \log \lambda)^{-1} \int_0^T e^{\beta t/4} dt=O((\log \lambda)^{-1})$. Finally, the integral of the second term on the right has an $L^2$ norm which is bounded by $C (\log \lambda)^{-1}$. This means the integral of the first term on the right, $(\log \lambda)^{-1} \int_0^T e^{i \alpha^\lambda} dt$ converges to 0 in probability from which the statement of the theorem follows.
\end{proof}

\section{Random Schr\"odinger limits}

The methods developed for tridiagonal matrices also work
for critical 1-dimensional random Schr\"odinger operators. It is interesting to compare the behavior of level statistics.

Consider the matrix
\begin{equation}\label{shrod1dmatrix}
H_n=\left( \begin{array}{cccccc}
v_1 & 1 &  &  &  & \\
1 & v_{2} & 1 &  & & \\
  & 1  &\ddots &\ddots & &\\
& & \ddots & \ddots &1 & \\
& & & 1 & v_{n-1} & 1 \\
& & & & 1 & v_{n} \\
\end{array} \right)
\end{equation}
where $v_k=\sigma \omega_k/\sqrt{n}$, and  $\omega_k$ are independent random variables with mean
$0$, variance $1$ and bounded third absolute moment.

To cut a long story short, one can take a limit of this operator around the global position $E$ just as the $\beta$-Hermite models in Theorem \ref{t:bulk}. The resulting
operator $\mathcal S_\tau$ is an analogue of $\mathcal C_\beta$, except it is driven
by time-homogeneous hyperbolic Brownian motion on an interval of length $\tau=\sigma^2/(1-E^2/4)$ is the only parameter left in the process. In \cite{KVV} we show that the large gap probabilities have a similar behaviour (exponentially decaying in the square of the gap) to the $\Sineb$ process (see also \cite{holcomb2013large} for more detailed large deviation results).

The CLT and the level repulsion are different, indicating much higher ordering. We include the geometric proof of the repulsion here, using the Brownian carousel description of Section \ref{s:carousel}. Let $\sch[I]$ denote the number of eigenvalues of the operator $\tau \mathcal S_\tau$ in the interval $I$.

\begin{theorem}[Eigenvalue repulsion, \cite{KVV}]\label{repulsion2} For $\eps>0$
we have
\begin{equation}\label{bound2eig2}
\Prob\set{\sch[0,\eps]\geq 2}\leq
4\exp\left(-\frac{(\log(2\pi/\eps)-\tau-1)^2}{\tau}\right).
\end{equation}
whenever the squared expression is nonnegative.
\end{theorem}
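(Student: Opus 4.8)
The plan is to translate the event into the Brownian carousel SDE of Section~\ref{s:carousel}, factor it at a trap using the strong Markov property, and reduce each factor to a one-dimensional Gaussian large deviation after a change of coordinates. \emph{Step 1: the oscillation dictionary.} By the analogue for $\mathcal S_\tau$ of Proposition~\ref{sseprop} (see \cite{KVV}), $\sch[0,\eps]$ has the law of the number of loops made by the phase $\alpha=\alpha_\eps$ solving the \emph{time-homogeneous} carousel SDE on $[0,\tau]$, of the form $d\alpha=\eps\,dt+2\sin(\alpha/2)\,dB$, $\alpha(0)=0$ (I suppress harmless normalizing constants). Two features are used repeatedly: the diffusion coefficient vanishes at each trap $2\pi\mathbb Z$ while the drift $\eps>0$ is positive there, so $\alpha$ cannot cross a trap downward, and hence $\{\sch[0,\eps]\ge2\}$ is contained in $\{\alpha\text{ reaches level }4\pi\text{ by time }\tau\}$; and the SDE is time-homogeneous, so $\alpha$ restarted from a trap has the same law regardless of the starting time.

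\emph{Step 2: factor at the trap $2\pi$.} Writing $\sigma_a$ for the hitting time of level $a$, on $\{\sigma_{4\pi}\le\tau\}$ we have $\sigma_\pi\le\sigma_{2\pi}\le\sigma_{4\pi}\le\tau$. Applying the strong Markov property at $\sigma_{2\pi}$ and time-homogeneity (the remaining time $\tau-\sigma_{2\pi}$ is $\le\tau$),
\[\Prob(\sigma_{4\pi}\le\tau)\ \le\ \Prob(\sigma_{2\pi}\le\tau)\,\Prob_{2\pi}(\sigma_{4\pi}\le\tau)\ \le\ p\,q,\]
where $p:=\Prob_0(\sigma_\pi\le\tau)\ge\Prob_0(\sigma_{2\pi}\le\tau)$ and $q:=\Prob_{2\pi}(\sigma_{3\pi}\le\tau)\ge\Prob_{2\pi}(\sigma_{4\pi}\le\tau)$. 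Both $p$ and $q$ are the probability of escaping a trap in the upward direction within time $\tau$, and they are identical by the $2\pi$-periodicity of $|\sin(\alpha/2)|$, so it suffices to bound each by $2\exp\bigl(-(\log(2\pi/\eps)-\tau-1)^2/(2\tau)\bigr)$; the product is then the claimed estimate, the factor $4$ coming from the two reflection bounds and the $\tau$ in the denominator from adding the two exponents.

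\emph{Step 3: the trap-escape estimate.} Pass to the hyperbolic coordinate $u=\log\tan(\alpha/4)$, which sends $(0,2\pi)$ bijectively onto $\mathbb R$, the trap at $0$ to $-\infty$, and $\alpha=\pi$ to $u=0$. By It\^o's formula $du=dB+\mu(\alpha)\,dt$, where the diffusion coefficient is now \emph{identically} $1$ and $\mu(\alpha)=\tfrac{\eps}{2\sin(\alpha/2)}-\tfrac12\cos(\alpha/2)$. On $(0,\pi]$ one has $\mu\ge-\tfrac12$, and $\mu\le1$ once $\alpha$ exceeds a threshold $\alpha_\ast$ of order $\eps$ (the blow-up of $\mu$ survives only in the $O(\eps)$-neighbourhood of the trap, where it pushes $u$ only \emph{upward}). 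On $\{\sigma_\pi\le\tau\}$ the coordinate $u$ must, after its first passage of $u(\alpha_\ast)=-\log(2\pi/\eps)+O(1)$, climb to $0$ over a time $\le\tau$ against a drift in $[-\tfrac12,1]$; comparing, above the level $u(\alpha_\ast)$, the increments of $u$ with those of $B_t+t$, and controlling the excursions below $u(\alpha_\ast)$ by a comparison argument (they can only help), this forces $\sup_{t\le\tau}B_t\ge\log(2\pi/\eps)-\tau-1$ after the $O(1)$'s are absorbed into the $-1$. The reflection principle, $\Prob(\sup_{t\le\tau}B_t\ge a)\le2\exp(-a^2/(2\tau))$, then yields the required bound on $p$ whenever $a=\log(2\pi/\eps)-\tau-1\ge0$, and the identical computation at the trap $2\pi$ (coordinate $\log\tan((\alpha-2\pi)/4)$) bounds $q$.

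\emph{Main obstacle.} The delicate point is the $O(\eps)$-neighbourhood of a trap, where $\mu$ is positive and unbounded: one must show that the process leaves this region essentially instantly, that its large drift there acts only upward so that a comparison argument suffices, and that the total cost of all this is just the $O(1)$ folded into the $-1$ — i.e., the real work is bookkeeping the various $O(1)$ constants so that they combine into exactly $\log(2\pi/\eps)-\tau-1$. The only other, deliberate, loss is in Step~2, where each of $p$ and $q$ is allotted the whole time budget $\tau$ rather than a share of it; this is why this geometric argument is weaker than the sharp repulsion bound obtainable from a finer analysis of the same SDE.
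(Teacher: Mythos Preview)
Your route is quite different from the paper's, which never touches the $\alpha$-SDE or any trap-factoring. The paper works entirely in the geometric picture via the ODE~\eqref{odegamma} for the fixed-frame angle $\gamma$, driven by the hyperbolic Brownian motion $\mathcal B$ in the Poincar\'e disk. Integrating \eqref{odegamma} over $[0,\tau]$ gives in one line $\gamma^{\eps/\tau}(\tau)\le \eps\,(1-\max_{t\le\tau}|\mathcal B_t|^2)^{-1}$, so two eigenvalues (one full loop, $\gamma\ge 2\pi$) force the hyperbolic radius $q(\mathcal B_t)=\log\frac{1+|\mathcal B_t|}{1-|\mathcal B_t|}$ to reach $\log(2\pi/\eps)$. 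It\^o then gives $dq=dB/\sqrt2+\tfrac14\coth(q)\,dt$; bounding the drift by $\coth(1)/4<1$ once $q\ge 1$ and absorbing the instantaneous push up to $q=1$ into the additive~$1$ yields $q(t)\le 1+t+|B_t|/\sqrt2$. A \emph{single} reflection bound on $\max_{[0,\tau]}|B|$ then produces the factor~$4$ and the denominator~$\tau$ at once---no product of two Gaussian tails. What your factoring buys you is a route that stays inside the one-dimensional $\alpha$-SDE; what the paper's geometry buys is that the near-trap singularity becomes the benign blow-up of $\coth q$ at $q=0$, disposed of by one additive constant.

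Your Step~3, however, has a genuine gap beyond bookkeeping. The claim that excursions below $u(\alpha_\ast)$ ``can only help'' is not correct for the comparison you intend: during such an excursion $\mu>1$, so for the candidate dominator $\hat u$ with $d\hat u=dB+dt$ one has $d(u-\hat u)=(\mu-1)\,dt>0$, and $u$ can overtake $\hat u$; the pathwise inequality $u\le\hat u$ fails. The honest fix is to restrict to the \emph{last} upcrossing of level $u(\alpha_\ast)$ before $\sigma_\pi$, on which $\alpha\ge\alpha_\ast$ throughout and $\mu\le1$ does hold. But that last-upcrossing time is not a stopping time, so the Brownian increment over it is controlled only by $\max_{s\le t\le\tau}(B_t-B_s)\stackrel{d}{=}\max_{[0,\tau]}|B|$, not by $\sup_{[0,\tau]}B$. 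This costs an extra factor~$2$ in each reflection bound, hence $16$ rather than $4$ after squaring. So your scheme recovers the correct order $\exp\bigl(-c(\log(1/\eps))^2/\tau\bigr)$, but the precise constants in~\eqref{bound2eig2}---the $4$, the $-1$, the denominator $\tau$---do not fall out of it; they come cleanly only from the paper's hyperbolic-radius argument.
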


\begin{proof}
If there are at least  two points in $[0,\eps]$ then the Brownian carousel had to take at least one full turn. Thus
\[
\Prob\set{\sch[0,\eps]\geq 2}\leq \Prob \set{\gamma^{
\eps/\tau}\of{\tau}\geq 2\pi}.
\]
where $\gamma$ is the solution of \eqref{odegamma}. From
(\ref{odegamma}) we get
\[
\gamma^{\eps/\tau}(\tau)\le  \eps \max_{0\le t \le \tau}
(1-|{B}_t|^2)^{-1}=\eps (1-\max_{0\le t \le \tau}|{B}_t|^2)^{-1}
\]
which means that
\begin{equation}
\gamma^{\eps/\tau}\of{\tau}\geq 2\pi \quad\Rightarrow\quad
1-\frac{\eps}{2 \pi}\le \max_{0\le t\le \tau}
|{B}_t|^2.\label{aux1}
\end{equation}
In the Poincar\'e disk model the hyperbolic distance between the origin and a point $z$ in the unit disk is given by $q(z)=\log\of{\frac{1+|z|}{1-|z|}}$. Thus (\ref{aux1}) implies
\[
\max_{0\le t\le\tau} q({B}_t)\ge \log\of{2\pi/
\eps}.
\]
The probability that the hyperbolic Brownian motion leaves
a ball  with a large radius $r$ in a fixed time is
comparable to the probability that a one-dimensional
Brownian motion  leaves $[-r,r]$ in the same time. This
follows  by noting that It\^o's formula with
(\ref{e:hbm}) gives
\[
dq =\frac{dB}{\sqrt{2}}+ \frac{\coth(q)}{4} dt
\]
for the evolution of $q({B})$ with a standard Brownian
motion $B$. By increasing the drift from $\coth(q)/4$ to
$\infty {\bf 1}_{q\in [0,1]}+\coth(1)/4$ we see that  $q$
is stochastically dominated by $1+t\coth(1)/4 +
|B(t)|/\sqrt{2}$ where $B$ is standard Brownian motion and
$\coth(1)<4$. Thus
\begin{eqnarray*}
\Prob\of{\max_{0\le t\le\tau } q({B}_t)\ge
\log\of{2\pi/ \eps}}&\le&
\Prob\of{\max_{0\le t\le\tau} |B(t)|\ge \log\of{2\pi/\eps}-1-\tau}\\
&\le&
4\exp\left(-\frac{(\log(2\pi/\eps)-\tau-1)^2}{\tau}\right)
\end{eqnarray*}
which proves the theorem.
\end{proof}
We note that continuum random Schr\"odinger models can also have such limits, see \cite{kona} and \cite{nakano}.

Most of this review was about eigenvalues. To conclude, we include a remarkable fact about the shape of localized eigenvectors of 1-dimensional random Schr\"odinger operators, \cite{Ben}.

\begin{theorem}\label{global_eigenvector}
Pick $\lambda$ uniformly from the eigenvalues of $H_n$ and let $\psi^\lambda$ be the corresponding normalized eigenvector. Let $B$ be a two sided Brownian motion started from $0$, and let
$$M(t) = \exp(B(t)-|t/2|).$$
Then, letting $\tau_E = \sigma^2/(1-E^2/4)/4$, as $n\to\infty$ we have the convergence in joint distribution
$$
\Big(\; \lambda, \;\psi^{\lambda} \lfloor t / n \rfloor ^2 dt^*  \Big)
\;\Longrightarrow\; \Big( \;E, \;M(\tau_E(t-U))dt^*\;\Big)
$$
where $E$ has arcsin distribution on $[-2,2]$, $U$ is uniform on $[0,1]$, and $E,U,M$ are independent. Here $dt^*$ signifies that the measures are both normalized to have total mass 1.
\end{theorem}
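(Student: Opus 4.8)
The plan is to reduce the statement about eigenvectors of $H_n$ to the transfer matrix dynamics, and then to the same hyperbolic Brownian motion that governs $\mathcal S_\tau$. Recall that if $\psi^\lambda$ solves $H_n\psi=\lambda\psi$, then the vector $(\psi_{k+1},\psi_k)^\dagger$ evolves by the $\mathrm{SL}_2$ transfer matrices $T_k = \left(\begin{smallmatrix} \lambda - v_k & -1 \\ 1 & 0 \end{smallmatrix}\right)$. Around the global position $E=2\cos\theta$ these matrices are, to leading order, a fixed elliptic rotation by $\theta$ conjugate to a small random perturbation of size $\sigma/\sqrt n$; this is exactly the regime analyzed for the operator limit $\mathcal S_\tau$. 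First I would write the Prüfer-type decomposition: conjugating to the rotation's eigenbasis, $(\psi_{k+1},\psi_k)$ has a phase $\gamma_k$ (whose increments give the eigenvalue/oscillation count, as in \eqref{odegamma}) and a log-radius $\rho_k = \log\|(\psi_{k+1},\psi_k)\|$. The key point is that, after the time change $k = \lfloor tn\rfloor$ and using the CLT for the increments (bounded third moment of $\omega_k$ is exactly what is needed, via a Skorokhod coupling as in the proof of Theorem \ref{t:rrv}), the pair $(\gamma,\rho)$ converges to a diffusion; the $\rho$-component, conditioned on being at an eigenvalue, becomes a Brownian motion in a potential well, which is precisely the statement that the shape is $M(t)=\exp(B(t)-|t/2|)$.

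The order of steps I would carry out: (1) set up the transfer-matrix / Prüfer coordinates and state the diffusion limit for $(\gamma_{\lfloor tn\rfloor},\rho_{\lfloor tn\rfloor})$ as $n\to\infty$, with the driving Brownian motion coupled to the hyperbolic Brownian motion $\mathcal B$ of \eqref{e:hbm} exactly as in Section \ref{s:carousel}; (2) identify the eigenvalue condition --- $\lambda$ is an eigenvalue of $H_n$ iff the solution started from the left boundary condition also satisfies the right boundary condition, i.e. $\gamma$ performs the prescribed number of full turns --- and observe that $E$ picked this way has the arcsin law (density of states of the free Jacobi matrix), with the localization center $U$ uniform because the position of the potential well along $[0,1]$ is asymptotically uniform; (3) condition on $\{\lambda \text{ is an eigenvalue}\}$ and apply a Doob $h$-transform / Girsanov argument to the $\rho$-diffusion: the eigenfunction is the one that decays at both ends, so conditionally $\rho$ is a Brownian motion whose drift is flipped to point toward the localization center, giving the $-|t/2|$ term after rescaling time by $\tau_E$; (4) translate the convergence of the log-radius process into convergence of the empirical mass measure $\psi^\lambda_{\lfloor t/n\rfloor}{}^2\,dt^*$ to $M(\tau_E(t-U))\,dt^*$, both normalized, which follows from uniform-on-compacts convergence of $\rho$ plus an exponential-tails (tightness) estimate ruling out mass escaping to the edges $t\in\{0,1\}$; (5) check joint independence of $E$, $U$, $M$ in the limit --- $E$ depends only on the macroscopic phase winding, $U$ only on where the well sits, and $M$ only on the microscopic fluctuations of $\rho$, and these decouple.

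The main obstacle is step (3)--(4): making rigorous the statement that conditioning on the two-sided boundary condition (an event of probability $\asymp 1/n$, since eigenvalues are $O(1/n)$-spaced) turns the free log-radius random walk into the tied-down process $B(t)-|t/2|$. One has to control the Radon--Nikodym derivative of the conditioned walk with respect to the unconditioned one uniformly, show the location $U$ of the minimum of $\rho$ is asymptotically uniform and independent of the local shape, and handle the singularity of the limiting drift at $t=U$ (where $|t/2|$ is not differentiable) --- this corresponds to the transfer-matrix dynamics switching from contracting to expanding as one crosses the localization center. A secondary technical point is the normalization: since $\psi^\lambda$ is $\ell^2$-normalized but $M$ is not integrable-normalized a priori, one must verify that the total mass concentrates in an $O(1)$-window around $nU$ so that the rescaled measures genuinely converge after both are renormalized to mass one; this again reduces to the exponential tails of $M$ and a matching finite-$n$ tightness bound on $\sum_k \psi_k^2$ outside such a window.
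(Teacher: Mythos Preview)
The paper does not actually contain a proof of this theorem: it is stated as a ``remarkable fact'' and attributed to \cite{Ben} (Rifkind--Vir\'ag, in preparation), with no argument given. So there is no proof in the paper to compare your proposal against.

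That said, your outline is the natural one and is consistent with how results of this type are proved. The transfer-matrix/Pr\"ufer decomposition, the diffusion limit of the log-radius $\rho$ and phase $\gamma$ driven by the same hyperbolic Brownian motion as $\mathcal S_\tau$, and the identification of the eigenvector shape via the log-radius process are all the right ingredients. You have also correctly located the genuine difficulty: step (3)--(4), where one must condition on an event of probability $\asymp 1/n$ (hitting the right boundary condition) and show that this conditioning produces the two-sided drift $-|t|/2$ with a uniform localization center $U$. This is not a routine Girsanov/Doob transform, because the conditioning is on a terminal event for a process whose limit is degenerate at the relevant scale; one typically handles it by working instead with the \emph{pair} of solutions from the two endpoints (left-to-right and right-to-left transfer matrices), each of which has an unconditioned diffusion limit, and then identifying the eigenvector with their pointwise product normalized---this sidesteps the vanishing-probability conditioning and makes the independence of $U$ and $M$ transparent. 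Your proposal would benefit from recasting step (3) in those terms rather than as a direct $h$-transform.
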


\section{Further open problems}

These are in addition to the problems and questions presented in the body of the article.

\begin{question}[Decimation]
In \cite{decimation} it was shown that deleting all but every $k$th eigenvalue of many finite $\beta=2/k$ ensembles gives the corresponding $\beta=2k$ ensemble. Can the limiting operators (bulk or edge) be coupled explicitly in this way?
\end{question}

\begin{question}[Random and deterministic orthogonal polynomials]
Is there a relation between the $\beta=2$ random orthogonal polynomials (see section \ref{s:randommeasure}) and the deterministic ones? How about the limiting operators?
\end{question}

\begin{question}[Dynamics]
Are there operator limits of matrix-valued (say Hermitian) Brownian motion?
\end{question}

\begin{question}[Painlev\'e in the bulk]\label{q:bulkpain}
Can one deduce the gap Painlev\'e equation from the PDE's corresponding to the generator of the Brownian carousel SDE?
\end{question}

\begin{question}[Loop equations]
Can one derive analogues of the loop equations directly from limiting operators?
\end{question}

\noindent {\bf Acknowledgments.} The author is grateful to the R\'enyi Institute, Budapest
for its hospitality during the writing of this paper.


\begin{thebibliography}{10}

\bibitem{AGZ}
G.~Anderson, A.~Guionnet, and O.~Zeitouni.
\newblock {\em Introduction to random matrices}.
\newblock Cambridge University Press, 2009.

\bibitem{BBP}
Jinho Baik, G{{\'e}}rard Ben~Arous, and Sandrine P{{\'e}}ch{{\'e}}.
\newblock Phase transition of the largest eigenvalue for nonnull complex sample
  covariance matrices.
\newblock {\em Ann. Probab.}, 33:1643--1697, 2005.

\bibitem{bekerman2013transport}
Florent Bekerman, Alessio Figalli, and Alice Guionnet.
\newblock Transport maps for beta-matrix models and universality.
\newblock {\em arXiv preprint arXiv:1311.2315}, 2013.

\bibitem{bloemendal2011finite}
Alex Bloemendal.
\newblock {\em Finite rank perturbations of random matrices and their continuum
  limits}.
\newblock PhD thesis, University of Toronto, 2011.

\bibitem{BV}
Alex Bloemendal and B{\'a}lint Vir{\'a}g.
\newblock Limits of spiked random matrices {I}.
\newblock {\em Probability Theory and Related Fields}, 156(3-4):795--825, 2013.

\bibitem{borot2012right}
Ga{\"e}tan Borot and C{\'e}line Nadal.
\newblock Right tail asymptotic expansion of {T}racy-{W}idom beta laws.
\newblock {\em Random Matrices: Theory and Applications}, 1(03), 2012.

\bibitem{bourgade2013edge}
Paul Bourgade, Laszlo Erdos, and Horng-Tzer Yau.
\newblock Edge universality of beta ensembles.
\newblock {\em arXiv preprint arXiv:1306.5728}, 2013.

\bibitem{BFS}
Jonathan Breuer, Peter~J Forrester, and Uzy Smilansky.
\newblock Random discrete {S}chr{\"o}dinger operators from random matrix
  theory.
\newblock {\em Journal of Physics A: Mathematical and Theoretical}, 40(5):F161,
  2007.

\bibitem{Claeys}
Tom Claeys, Igor Krasovsky, and Alexander Its.
\newblock Higher-order analogues of the {T}racy-{W}idom distribution and the
  {P}ainlev{\'e} {II} hierarchy.
\newblock {\em Communications on pure and applied mathematics}, 63(3):362--412,
  2010.

\bibitem{de1968hilbert}
Louis de~Branges.
\newblock {\em Hilbert spaces of entire functions}, volume~1.
\newblock Prentice-Hall Englewood Cliffs, NJ, 1968.

\bibitem{DIKZ07}
P.~Deift, A.~Its, I.~Krasovsky, and X.~Zhou.
\newblock The {W}idom-{D}yson constant for the gap probability in random matrix
  theory.
\newblock {\em J. Comput. Appl. Math.}, 202(1):26--47, 2007.

\bibitem{Deift}
P.~A. Deift.
\newblock {\em Orthogonal polynomials and random matrices: a
  {R}iemann-{H}ilbert approach}.
\newblock Courant Lecture Notes in Mathematics. New York, 1999.

\bibitem{DIZ96}
P.A. Deift, A.R. Its, and X.~Zhou.
\newblock A {Riemann-Hilbert} approach to asymptotic problems arising in the
  theory of random matrices and also in the theory of integrable statistical
  mechanics.
\newblock {\em Ann. Math.}, 146:149--235, 1997.

\bibitem{dCM73}
J.~des Cloizeaux and M.~L. Mehta.
\newblock Asymptotic behavior of spacing distributions for the eigenvalues of
  random matrices.
\newblock {\em J. Mathematical Phys.}, 14:1648--1650, 1973.

\bibitem{desrosiers}
Patrick Desrosiers and Dang-Zheng Liu.
\newblock Asymptotics for products of characteristic polynomials in classical
  beta ensembles.
\newblock {\em Constructive Approximation}, pages 1--50, 2013.

\bibitem{dumaz2011right}
Laure Dumaz and B{\'a}lint Vir{\'a}g.
\newblock The right tail exponent of the {T}racy-{W}idom-$\beta$ distribution.
\newblock {\em arXiv preprint arXiv:1102.4818}, 2011.

\bibitem{DE}
Ioana Dumitriu and Alan Edelman.
\newblock Matrix models for beta ensembles.
\newblock {\em J. Math. Phys.}, 43(11):5830--5847, 2002.

\bibitem{Dy62}
F.J. Dyson.
\newblock Statistical theory of energy levels of complex systems {II}.
\newblock {\em J. Math. Phys.}, 3:157--165, 1962.

\bibitem{ES07}
Alan Edelman and Brian~D Sutton.
\newblock From random matrices to stochastic operators.
\newblock {\em Journal of Statistical Physics}, 127(6):1121--1165, 2007.

\bibitem{Ehr06}
Torsten Ehrhardt.
\newblock Dyson's constant in the asymptotics of the {F}redholm determinant of
  the sine kernel.
\newblock {\em Comm. Math. Phys.}, 262(2):317--341, 2006.

\bibitem{decimation}
P.~J. {Forrester}.
\newblock {A random matrix decimation procedure relating $\beta = 2/(r+1)$ to
  $\beta = 2(r+1)$}.
\newblock {\em ArXiv e-prints}, November 2007.

\bibitem{holcomb2012edge}
Diane Holcomb and Gregorio R~Moreno Flores.
\newblock Edge scaling of the $\beta$-{J}acobi ensemble.
\newblock {\em Journal of Statistical Physics}, 149(6):1136--1160, 2012.

\bibitem{holcomb2013large}
Diane Holcomb and Benedek Valk{\'o}.
\newblock Large deviations for the sine-beta and sch-tau processes.
\newblock {\em arXiv preprint arXiv:1311.2981}, 2013.

\bibitem{jacquot2011bulk}
St{\'e}phanie Jacquot and Benedek Valk{\'o}.
\newblock Bulk scaling limit of the {L}aguerre ensemble.
\newblock {\em Electronic Journal of Probability}, 16:314--346, 2011.

\bibitem{J2}
Iain~M. Johnstone.
\newblock High dimensional statistical inference and random matrices.
\newblock In {\em International {C}ongress of {M}athematicians.\ {V}ol.\ {I}},
  pages 307--333. Eur. Math. Soc., Z{\"u}rich, 2007.

\bibitem{killip2008gaussian}
Rowan Killip.
\newblock Gaussian fluctuations for $\beta$ ensembles.
\newblock {\em IMRN: International Mathematics Research Notices}, 2008.

\bibitem{KillipNenciu}
Rowan Killip and Irina Nenciu.
\newblock Matrix models for circular ensembles.
\newblock {\em International Mathematics Research Notices},
  2004(50):2665--2701, 2004.

\bibitem{KS}
Rowan Killip and Mihai Stoiciu.
\newblock Eigenvalue statistics for {CMV} matrices: from {P}oisson to clock via
  random matrix ensembles.
\newblock {\em Duke Math. J.}, 146(3):361--399, 2009.

\bibitem{kona}
S.~{Kotani} and F.~{Nakano}.
\newblock {Level statistics of one-dimensional {S}chr\"oodinger operators with
  random decaying potential}.
\newblock {\em ArXiv e-prints}, October 2012.

\bibitem{Kr04}
I.~V. Krasovsky.
\newblock Gap probability in the spectrum of random matrices and asymptotics of
  polynomials orthogonal on an arc of the unit circle.
\newblock {\em Int. Math. Res. Not.}, (25):1249--1272, 2004.

\bibitem{KRV}
M.~{Krishnapur}, B.~{Rider}, and B.~{Virag}.
\newblock {Universality of the Stochastic Airy Operator}.
\newblock {\em ArXiv e-prints}, June 2013.

\bibitem{KVV}
Eugene Kritchevski, Benedek Valk{{\'o}}, and B{{\'a}}lint Vir{{\'a}}g.
\newblock The scaling limit of the critical one-dimensional random
  {S}chr{\"o}dinger operator.
\newblock {\em Comm. Math. Phys.}, 314(3):775--806, 2012.

\bibitem{ledoux2010small}
Michel Ledoux and Brian Rider.
\newblock Small deviations for beta ensembles.
\newblock {\em Electron. J. Probab}, 15(41):1319--1343, 2010.

\bibitem{Mo}
MY~Mo.
\newblock Rank 1 real wishart spiked model.
\newblock {\em Communications on Pure and Applied Mathematics},
  65(11):1528--1638, 2012.

\bibitem{mont}
Hugh~L. Montgomery.
\newblock Distribution of the zeros of the {R}iemann zeta function.
\newblock In {\em Proceedings of the {I}nternational {C}ongress of
  {M}athematicians ({V}ancouver, {B}. {C}., 1974), {V}ol. 1}, pages 379--381.
  Canad. Math. Congress, Montreal, Que., 1975.

\bibitem{nakano}
Fumihiko Nakano.
\newblock Level statistics for one-dimensional {Schr\"odinger operators} and
  {G}aussian beta ensemble.
\newblock {\em arXiv preprint arXiv:1312.6901}, 2013.

\bibitem{hardedge}
Jos{\'e}~A Ram{\'\i}rez and Brian Rider.
\newblock Diffusion at the random matrix hard edge.
\newblock {\em Communications in Mathematical Physics}, 288(3):887--906, 2009.

\bibitem{RRV}
Jos{{\'e}}~A. Ram{\'{\i}}rez, Brian Rider, and B{{\'a}}lint Vir{{\'a}}g.
\newblock Beta ensembles, stochastic {A}iry spectrum, and a diffusion.
\newblock {\em J. Amer. Math. Soc.}, 24(4):919--944, 2011.

\bibitem{ramirez2011hard}
Jose~A Ramirez, Brian Rider, and Ofer Zeitouni.
\newblock Hard edge tail asymptotics.
\newblock {\em Electronic Communications in Probability}, 16:741--752, 2011.

\bibitem{rhodes2013gaussian}
R{\'e}mi Rhodes and Vincent Vargas.
\newblock Gaussian multiplicative chaos and applications: a review.
\newblock {\em arXiv preprint arXiv:1305.6221}, 2013.

\bibitem{Ben}
Ben Rifkind and B\'alint Vir\'ag.
\newblock The shape of eigenvectors of 1d random {S}chr\"odinger operators.
\newblock In preparation., 2014+.

\bibitem{rumanov2012hard}
Igor Rumanov.
\newblock Hard edge for beta-ensembles and {P}ainlev\'e {III}.
\newblock {\em arXiv preprint arXiv:1212.5333}, 2012.

\bibitem{Trotter}
Hale~F. Trotter.
\newblock Eigenvalue distributions of large {H}ermitian matrices; {W}igner's
  semicircle law and a theorem of {K}ac, {M}urdock, and {S}zeg{\H o}.
\newblock {\em Adv. in Math.}, 54(1):67--82, 1984.

\bibitem{BVBV}
Benedek Valk\'o and Balint Vir\'ag.
\newblock Continuum limits of random matrices and the {B}rownian carousel.
\newblock {\em Inventiones Math.}, 177:463--508, 2009.

\bibitem{BVBV2}
Benedek Valk{{\'o}} and B{{\'a}}lint Vir{{\'a}}g.
\newblock Large gaps between random eigenvalues.
\newblock {\em Ann. Probab.}, 38(3):1263--1279, 2010.

\bibitem{VV3}
Benedek Valk\'o and B\'alint Vir\'ag.
\newblock Bulk operators.
\newblock In preparation., 2014+.

\bibitem{Wi96}
H.~Widom.
\newblock The asymptotics of a continuous analogue of orthogonal polynomials.
\newblock {\em J. Approx. Theory}, 77:51--64, 1996.

\end{thebibliography}

\def\cprime{$'$} \def\cprime{$'$}

\end{document}